\tikzstyle arrowstyle=[scale=1]
\tikzstyle directed=[postaction={decorate,decoration={markings,
    mark=at position .58 with {\arrow[arrowstyle]{to}}}}]
\theoremstyle:=definition,remark,plain\do{
\expandafter\g@addto@macro\csname th@\theoremstyle\endcsname{
\addtolength\thm@preskip\parskip}}
\numberwithin{equation}{section}
\theoremstyle{plain}
\newtheorem{theorem}[subsection]{Theorem}
\newtheorem{proposition}[subsection]{Proposition}
\newtheorem{lemma}[subsection]{Lemma}
\newtheorem{corollary}[subsection]{Corollary}
\newtheorem{question}[subsection]{Question}
\newtheorem{remark}[subsection]{Remark}
\theoremstyle{definition}
\newtheorem{definition}[subsection]{Definition}
\newtheorem{example}[subsection]{Example}
\renewcommand{\leq}{\leqslant}
\renewcommand{\geq}{\geqslant}
\newcommand{\eps}{\varepsilon}
\DeclareMathOperator{\hor}{hor}
\DeclareMathOperator{\rad}{rad}
\def\A{{\mathcal A}}
\def\C{{\mathcal C}}
\def\D{{\mathcal D}}
\def\Z{{\mathbb Z}}
\def\E{{\mathbb E}}
\def\F{{\mathbb F}}
\def\H{{\mathbb H}}
\def\FF{{\mathcal F}}
\def\R{{\mathbb R}}
\def\N{{\mathbb N}}
\def\Q{{\mathbb Q}}
\def\m{{\mathfrak m}}
\def\S{{\mathbb S}}
\def\vs{\vspace{11pt}}
\def\ni{\noindent}
\begin{document}

\title{The boundary action of a sofic random subgroup of the free group}

\author{Jan Cannizzo}

\address{Stevens Institute of Technology\newline
\indent Department of Mathematical Sciences\newline
\indent Castle Point on Hudson\newline
\indent Hoboken, NJ 07030\newline
}

\email{jan.c.cannizzo@gmail.com}

\thanks{I am very grateful to my advisor, Vadim Kaimanovich, for his advice and support, and for encouraging me to work on the subject of this paper. I thank the anonymous referee for suggesting several improvements.}


\maketitle

\begin{abstract}
We prove that the boundary action of a sofic random subgroup of a finitely generated free group is conservative (there are no wandering sets). This addresses a question asked by Grigorchuk, Kaimanovich, and Nagnibeda, who studied the boundary actions of individual subgroups of the free group. We also investigate the cogrowth and various limit sets associated to sofic random subgroups. We make heavy use of the correspondence between subgroups and their Schreier graphs, and central to our approach is an investigation of the asymptotic density of a given set inside of large neighborhoods of the root of a sofic random Schreier graph.
\end{abstract}

\section{Introduction}

The study of \emph{invariant random subgroups}, meaning subgroups of a given group whose distribution is conjugation-invariant, has recently attracted a lot of attention. Vershik has called for a description of all nonatomic conjugation-invariant measures on the lattice of subgroups of a given countable group \cite{V1} and provided such a description in the case of the infinite symmetric group \cite{V2}. Such measures naturally arise from the boundary actions of self-similar groups, such as the \emph{Basilica group} (see the treatment of D'Angeli, Donno, Matter, and Nagnibeda \cite{ADMN}), or the famous Grigorchuk group (see \cite{Vo}), and progress has recently been made in understanding the spaces of invariant random subgroups of free groups \cite{B} and lamplighter groups \cite{BGK}. Ab\'{e}rt, Glasner, and Vir\'{a}g recently generalized \emph{Kesten's theorem} to invariant random subgroups \cite{AGV}. Moreover, invariant random subgroups are closely connected with the theory of sofic groups (see, for example, the survey \cite{Pe}) and sofic equivalence relations \cite{EL}.

There is a fruitful interplay between groups and graphs, as is evidenced, for instance, in the classic paper of Stallings \cite{S}. Central to our approach is the fact that it is possible to switch back and forth between subgroups and their \emph{Schreier graphs} (objects which generalize Cayley graphs), allowing one to think about subgroups in geometric terms. Accordingly, the study of invariant random subgroups is tantamount to the study of invariant random Schreier graphs (which in turn belongs to the theory of discrete measured equivalence relations established by Feldman and Moore~\cite{FM}).

Intuitively speaking, invariant random Schreier graphs behave rather like Cayley graphs, the analogy being that, whereas a Cayley graph is spatially homogenous, insofar as it is vertex-transitive, i.e.\ invariant upon shifting the root, an invariant random Schreier graph is \emph{stochastically homogenous} (see \cite{K2} for the origin of the term), insofar as its distribution is invariant upon shifting the root. Grigorchuk, Kaimanovich, and Nagnibeda \cite{GKN} recently studied the ergodic properties of the action of a subgroup $H\leq\F_n$ of a finitely generated free group on the boundary $\partial\F_n$ equipped with the uniform measure (a situation which is analogous to the action of a Fuchsian group on the boundary of the hyperbolic plane equipped with Lebesgue measure). In particular, they used Schreier graphs to describe the \emph{Hopf decomposition} (into conservative and dissipative parts) of this action. Although the boundary action of an arbitrary subgroup may be conservative, dissipative, or such that both its conservative and dissipative parts have positive measure \cite{GKN}, the boundary action of a normal subgroup is necessarily conservative, as follows from \cite{K1}. Our main result is an extension of this result to \emph{sofic random subgroups}. That is, we show that \emph{the boundary action of a sofic random subgroup of a finitely generated free group is conservative} (Theorem~\ref{conservative}), addressing a question asked in \cite{GKN}.

Before proving our main result, we undertake an investigation of the asymptotic density of a given set inside of a random invariant Schreier graph. Our main question of interest (Question~\ref{q}) can be formulated as follows: given a nontrivial subset $A$ of the space of Schreier graphs, must the density of $A$ inside of large neighborhoods of the root of an invariant random Schreier graph be bounded away from zero? If so, then we say the invariant random Schreier graph has \emph{property D}. A positive answer to the question would amount to a new \emph{ergodic theorem} for invariant random graphs (see \cite{BN} for an overview of many ergodic theorems). Unfortunately, we are unable to answer Question~\ref{q}, but by introducing a notion which we call \emph{relative thinness} and assuming that our invariant random Schreier graph $\Gamma$ is sofic, we are able to show that $\Gamma$ fails to satisfy the aforementioned property only if its geometry is quite peculiar (Proposition~\ref{lop}), a fact which allows us to prove Theorem~\ref{conservative}.

The paper is organized as follows: In Section~2, we give an introduction to Schreier graphs and invariant random Schreier graphs, making plain their connection with subgroups. Section~3 is devoted to making precise the question of whether a given set is asymptotically dense inside of large neighborhods of the root of an invariant random Schreier graphs. In Section~4, we introduce sofic invariant subgroups and thereafter, in Section~5, the notion of relative thinness, which allows us to shed some light on Question~\ref{q}. In Section~6, we prove our main result, showing that the boundary action of a sofic random subgroup is conservative (Theorem~\ref{conservative}). Finally, in Section~7, we tease out several consequences of Theorem~\ref{conservative}, namely a bound on the \emph{cogrowth} of a sofic random subgroup (Corollary~\ref{cogrowth}) and a theorem on the size of various \emph{limit sets} associated to sofic random subgroups of $\F_n$ (Theorem~\ref{lim}). We also give examples showing that the \emph{radial limit set} may have full or zero measure, thus completely characterizing the possible measures of the limit sets of a sofic invariant subgroup.

\section{The space of Schreier graphs of a countable group}

Given a countable group $G$ with generating set $\A=\{a_i\}_{i\in I}$ and a subgroup $H\leq G$, consider the natural action of $G$ on the space of (right) cosets $G/H$. This action is transitive and determines a graph $\Gamma=(\Gamma,H)$ as follows. The vertex set of $\Gamma$ is identified with $G/H$, and two vertices $Hg$ and $Hg'$ are connected with an edge directed from $Hg$ to $Hg'$ and labeled with the generator $a_i$ if and only if $Hga_i=Hg'$. The graph $\Gamma$ (which is \emph{rooted} at $H$, meaning that we distinguish the vertex $H$) is called a (right) \emph{Schreier graph}, and we denote by $\Lambda(G)$ the space of (isomorphism classes) of (right) Schreier graphs of $G$, where two Schreier graphs are said to be isomorphic if there exists a graph isomorphism between them which preserves the edge-labeling and root. Note that Schreier graphs are necessarily $2|\A|$-regular, meaning that each of their vertices has degree $2|\A|$ (the degree of a vertex may be defined as the sum of the number of incoming edges and the number of outgoing edges attached to it). Schreier graphs may have both loops (cycles of length one) and multi-edges (multiple edges that join the same pair of vertices). Note also that Schreier graphs naturally generalize Cayley graphs, which arise whenever the subgroup $H$ is normal, i.e.\ when the cosets $Hg$ correspond to the elements of a group.

Let us immediately turn our attention to the space of Schreier graphs of the finitely generated free group of rank $n$ with a fixed set of generators, i.e.\
\[
\F_n=\langle a_1,\ldots,a_n\rangle.
\]
This is natural since, as we will presently make clear, every Schreier graph is a Schreier graph of a free group. Our first observation is this: Given a Schreier graph $(\Gamma,H)\in\Lambda(\F_n)$, the subgroup $H\leq\F_n$ can be recovered from $\Gamma$ in a very natural way. Namely, $H$ is precisely the fundamental group $\pi_1(\Gamma,H)$, i.e.\ the set of words read upon traversing closed paths that begin and end at the coset $H$. Note that we thereby identify $\pi_1(\Gamma,H)$ with a specific subgroup of $\F_n$ and are not interested merely in its isomorphism class. By the above discussion, it follows that $\Lambda(G)\subseteq\Lambda(\F_n)$ whenever $G$ is a group with generating set $\A=\{a_1,\ldots,a_n\}$. It also follows that we could define Schreier graphs ``abstractly,'' without appealing to the coset structure determined by a subgroup of $\F_n$. That is, we could define a Schreier graph to be a (connected and rooted) $2n$-regular graph whose edges come in $n$ different colors and are colored so that every vertex is attached to precisely one incoming edge of a given color and one outgoing edge of that color.

There is a natural one-to-one correspondence between the lattice of subgroups of $\F_n$, denoted $L(\F_n)$, and the space of Schreier graphs $\Lambda(\F_n)$. Every subgroup $H\in L(\F_n)$ determines a Schreier graph, and every Schreier graph $\Gamma\in\Lambda(\F_n)$ determines a subgroup of $\F_n$ (by passing to the fundamental group):
\begin{equation*}
\begin{tikzcd}[column sep=55]
L(\F_n)\arrow[bend left]{r}{(\Gamma,H)}
&\Lambda(\F_n)\arrow[bend left]{l}{\pi_1(\Gamma)}\,.
\end{tikzcd}
\end{equation*}
\ni The space of Schreier graphs $\Lambda(\F_n)$ has a natural projective structure. Denote by $\Lambda_r(\F_n)$ the set of (isomorphism classes of) $r$-neighborhoods centered at the roots of elements of $\Lambda(\F_n)$, where by an $r$-neighborhood we mean the subgraph of a Schreier graph induced by the set of vertices at distance less than or equal to $r$ from the root. Then $\Lambda(\F_n)$ may be realized as the projective limit
\begin{equation*}\label{proj}
\Lambda(\F_n)=\varprojlim\Lambda_r(\F_n),
\end{equation*}
where the connecting morphisms $\pi_r:\Lambda_{r+1}(\F_n)\to\Lambda_r(\F_n)$ are restriction maps that send an $(r+1)$-neighborhood $V$ to the $r$-neighborhood $U$ of its root. (Looking at things the other way around, $\pi_r(V)=U$ only if there exists an embedding $U\hookrightarrow V$ that sends the root of $U$ to the root of $V$.) By endowing each of the sets $\Lambda_r(\F_n)$ with the discrete topology, we turn $\Lambda(\F_n)$ into a compact Polish space.

Throughout this paper, we will think of an $r$-neighborhood $U\in\Lambda_r(\F_n)$ both as a rooted graph and as the \emph{cylinder set}
\[
U=\{(\Gamma,x)\in\Lambda(\F_n)\mid U_r(x)\cong U\},
\]
where $U_r(x)$ denotes the $r$-neighborhood of the vertex $x$. Note that a finite Borel measure $\mu$ on $\Lambda(\F_n)$ is the same thing as a family of measures $\mu_r:\Lambda_r(\F_n)\to\R$ that satisfies
\[
\mu_r(U)=\sum_{V\in\pi_r^{-1}(U)}\mu_{r+1}(V)
\]
for all $U\in\Lambda_r(\F_n)$ and for all $r$. As is customary when working with measure spaces, all statements regarding measurable sets will be understood to be valid \emph{modulo zero}, i.e.\ up to the inclusion or exclusion of null sets (in particular, we will avoid use of qualifying expressions such as ``almost every.'')

By an \emph{invariant random subgroup} of a countable group $G$, we will mean a probability measure on $L(G)$ that is conjugation-invariant, i.e.\ invariant under the action $G\circlearrowright L(G)$ given by $(g,H)\mapsto gHg^{-1}$. Via the correspondence between $L(G)$ and $\Lambda(G)$ (indeed, it is via this correspondence that we endow $L(G)$ with its Borel structure), this determines a continuous action on $\Lambda(G)$ which is easily visualized as follows: Given a Schreier graph $(\Gamma,H)$ and an element $g\in G$, where we assume that $g$ has a fixed presentation in terms of the generators of $G$, it is possible to read the element $g$ starting from the root $H$ (or, indeed, from any other vertex). This is accomplished by following, in the proper order, edges labeled with the generators that comprise $g$ (note that following a generator $a_i^{-1}$ is tantamount to traversing a directed edge labeled with $a_i$ in the direction opposite to which the edge is pointing). Applying the group element $g$ to the graph $(\Gamma,H)$ then amounts simply to ``shifting the root" of $(\Gamma,H)$ in the way just described. That is, one begins at the vertex $H$, then follows the path corresponding to the element $g$, and then declares its endpoint to be the new root. Note that if $G$ has generators of order two, then a path corresponding to an element $g\in G$ may not be unique; nevertheless, the endpoint of any path which represents $g$ is uniquely determined by $g$.
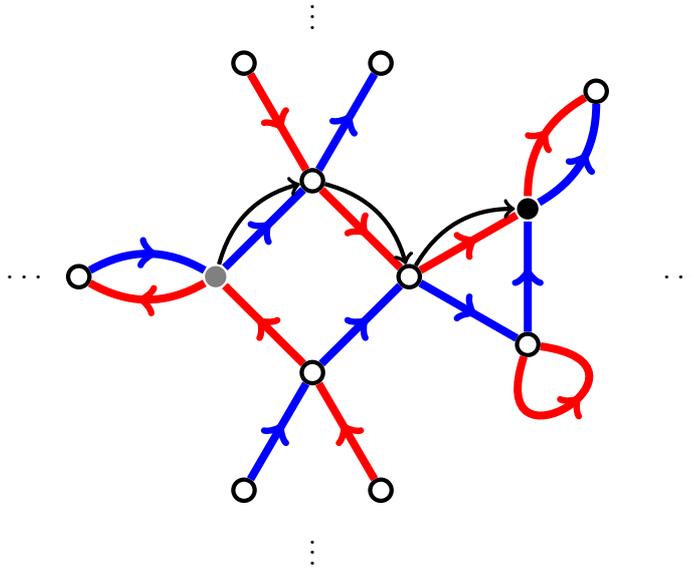
\begin{figure}\label{fig1}
\begin{tikzpicture}[scale=1.8,root node/.style={circle,ultra thick,draw=none,fill,black,inner sep=0pt,minimum size=8pt},
oldroot node/.style={circle,ultra thick,draw=none,fill,gray,inner sep=0pt,minimum size=8pt},
normal node/.style={circle,ultra thick,draw=black,inner sep=0pt,minimum size=8pt}]


\node[oldroot node](e) at (1,0){};
\node[normal node](a) at (0,0){};
\node[normal node](b) at (1.707,0.707){};
\node[normal node](a^-1) at (1.707,-0.707){};
\node[normal node](ba) at (2.414,0){};
\node[root node](baa) at (3.28,0.5){};
\node[normal node](bab) at (3.28,-0.5){};
\node[normal node](bb) at (2.207,1.573){};
\node[normal node](ba^-1) at (1.207,1.573){};
\node[normal node](a^-2) at (2.207,-1.573){};
\node[normal node](a^-1b^-1) at (1.207,-1.573){};
\node[normal node](baaa) at (3.78,1.366){};

\foreach \from\to in {a^-1/e,a^-2/a^-1,ba^-1/b,b/ba,ba/baa}
\draw[red,line width=3,directed](\from)--(\to);

\foreach \from\to in {e/b,b/bb,a^-1b^-1/a^-1,a^-1/ba,ba/bab,bab/baa}
\draw[blue,line width=3,directed](\from)--(\to);

\draw[red,line width=3,directed](e) to[bend left](a);
\draw[blue,line width=3,directed](a) to[bend left](e);
\draw[red,line width=3,directed](baa) to[bend left](baaa);
\draw[blue,line width=3,directed](baa) to[bend right](baaa);

\draw[red,line width=3,distance=1cm,directed](bab) to[out=-108,in=-8,](bab);

\node at (4.4,0){$\ldots$};
\node at (-0.4,0){$\ldots$};
\node at (1.707,1.973){$\vdots$};
\node at (1.707,-1.973){$\vdots$};

\draw[->,ultra thick](e) to[bend left](b);
\draw[->,ultra thick](b) to[bend left](ba);
\draw[->,ultra thick](ba) to[bend left](baa);
\end{tikzpicture}
\caption{A Schreier graph of the free group $\F_2=\langle a,b\rangle$, with red edges representing the generator $a$ and blue edges the generator $b$. Shown here is conjugation by the element $ba^2\in\F_2$, which entails starting at the root (the gray vertex), then following the edges corresponding to the generators $b$, $a$, and $a$ again (in that order), and declaring their endpoint to be the new root (the black vertex).}
\end{figure}

The image of a $G$-invariant measure under the identification $H\mapsto(\Gamma,H)$ is a $G$-invariant measure on $\Lambda(G)$ (and hence, via the inclusion $\Lambda(G)\hookrightarrow\Lambda(\F_n)$, an $\F_n$-invariant measure on $\Lambda(\F_n)$). We may thus speak of an \emph{invariant random Schreier graph}. In fact, throughout the remainder of the paper we will treat invariant random subgroups and invariant random Schreier graphs as the same objects, using whichever terminology is more appropriate to the context.

The most basic examples of invariant random Schreier graphs are Dirac measures supported on Cayley graphs: indeed, Cayley graphs (equivalently, Schreier graphs of normal subgroups of $\F_n$) are invariant under conjugation essentially by definition and may be regarded as $1$-periodic points in $\Lambda(\F_n)$. More generally, the uniform measure supported on a finite Schreier graph (of cardinality $k$, say) is an invariant measure, thus giving rise to $k$-periodic points, and it is not difficult to construct examples of infinite periodic Schreier graphs. Of greater interest is the space of \emph{nonatomic} invariant measures, typically supported on aperiodic Schreier graphs. Such measures have recently been the focus of a great deal of research (see \cite{AGV}, \cite{ADMN}, \cite{B}, \cite{BGK}, \cite{V1}, \cite{V2}, and \cite{Vo}), but much remains unknown.

\section{A question regarding the density of sets inside of large neighborhoods}

Our main result is that the boundary action of a sofic random subgroup is conservative. By a theorem of Grigorchuk, Kaimanovich, and Nagnibeda~\cite{GKN}, this assertion is equivalent to the assertion that
\begin{equation}\label{thm}
\lim_{r\to\infty}\frac{|U_r(\Gamma,H)|}{|U_r(\F_n,e)|}=0,
\end{equation}
where the numerator of the above fraction is the size of the $r$-neighborhood of the root of our random Schreier graph and the denominator is the size of the $r$-neighborhood of the identity of the Cayley graph of $\F_n$. In proving this result, our focus will first be on a considerably more general question regarding the asymptotic density of a given set inside of neighborhoods centered at the root of a random graph. This latter question can be formulated as follows: if $A\subseteq(\Lambda(\F_n),\mu)$ is a measurable subset of the space of Schreier graphs and $\mu$ is an invariant measure, then how dense is $A$ inside of $\mu$-random $r$-neighborhoods $U_r(x)\in\Lambda_r(\F_n)$? An informal---and imprecise---way to say what we mean by the density of $A$ in $U_r(x)$ is in terms the function $\rho_{A,r}:\Lambda(\F_n)\to\Q$ given by
\[
\rho_{A,r}(\Gamma,x)\colonequals\frac{|A\cap U_r(x)|}{|U_r(x)|}.
\]
To make this rigorous, note that for any $A\subseteq\Lambda(\F_n)$ there is an induced Borel embedding
\[
\Theta_A:\Lambda(\F_n)\to\bigcup_{\Gamma\in\Lambda(\F_n)}\{0,1\}^\Gamma\equalscolon\{0,1\}^{\Lambda(\F_n)}
\]
which sends a Schreier graph $\Gamma$ to the \emph{binary field} $\FF:\Gamma\to \{0,1\}$ given by
\begin{equation}\label{char}
\FF(x)=\left\{
\begin{array}{lr}
1,&(\Gamma,x)\in A\\
0,&(\Gamma,x)\notin A
\end{array}
\right.,
\end{equation}
where $(\Gamma,x)$ is the Schreier graph obtained from $\Gamma$ by \emph{rerooting} $\Gamma$ at the vertex $x$. The resulting space of binary configurations over elements of $\Lambda(\F_n)$ (namely, the image of $\Theta_A$) serves to ``highlight'' the set $A$, and the corresponding functions $\rho_{A,r}$ may now be written as
\begin{equation}\label{density}
\rho_r(\FF)=\frac{1}{|U_r(x)|}\sum_{y\in U_r(x)}\FF(y).
\end{equation}
Note that if $\mu$ is an invariant measure on $\Lambda(\F_n)$, then $(\Theta_A)_*\mu$ is an invariant measure on $\{0,1\}^{\Lambda(\F_n)}$. From now on, when talking about the density of a given set $A$ inside of $r$-neighborhoods, we will refer to the functions $\rho_{A,r}$ defined over the binary field constructed as per (\ref{char}), without necessarily making mention of the map $\Theta_A$. We are now ready to formulate our question:
\begin{question}\label{q}
Let $\mu$ be an invariant random Schreier graph and $A\subseteq\Lambda(\F_n)$ a Borel set, and consider the average densities
\[
\E(\rho_{A,r})=\int\rho_{A,r}\,d\mu.
\]
Then supposing $\E(\rho_{A,0})>0$, what can be said of the averages $\E(\rho_{A,r})$? Do they converge? Are they bounded away from zero?
\end{question}
More generally, consider an $\F_n$-invariant measure $\mu$ on $\{0,1\}^{\Lambda(\F_n)}$ (which needn't necessarily come from a Borel set $A\subseteq\Lambda(\F_n)$ as above). The following example shows that if such an invariant random binary field has a ``fixed geometry,'' meaning that it is supported on a common underlying graph, then it must answer Question~\ref{q} in the positive.
\begin{example}
Let $\Gamma\in\Lambda(\F_n)$ be a Cayley graph, i.e.\ the Schreier graph of a normal subgroup of $\F_n$, and $\mu$ an invariant measure on $\{0,1\}^\Gamma$. Then one readily verifies that the average densities $\E(\rho_r)$ are all the same. Indeed, we have
\begin{align*}
\int\rho_r\,d\mu&=\int\left(\frac{1}{|U_r(x)|}\sum_{y\in U_r(x)}\FF(y)\right)d\mu\\
&=\frac{1}{|U_r(x)|}\sum_{y\in U_r(x)}\left(\int\FF(y)\,d\mu\right)\\
&=\frac{1}{|U_r(x)|}\sum_{y\in U_r(x)}\E(\rho_0)=\E(\rho_0).
\end{align*}
If, however, our random invariant Schreier graph ceases to be so nice (e.g.\ if it ceases to be vertex-transitive), then the averages $\E(\rho_r)$ can be expected to vary considerably from $\E(\rho_0)$. Our question is: How much? Can they be arbitrarily close to zero if $\E(\rho_0)$ is not zero?
\end{example}
Question~\ref{q} asks whether invariance implies that, given a subset of the space of Schreier of positive measure (in other words, a nontrivial property of the root of our random graph), it will be asymptotically dense inside of large $r$-neighborhoods. For the sake of brevity, let us give this property a name.
\begin{definition}(Property $D$)
We say that an invariant random Schreier graph $\mu$ has \emph{property D} if it answers Question~\ref{q} in the positive, in the sense that, if $A\subseteq\Lambda(\F_n)$ is any subset with $\mu(A)>0$, then the average densities $\E(\rho_{A,r})$ of $A$ inside of $r$-neighborhoods are bounded away from zero.
\end{definition}
We will show that, upon placing a mild condition on our random invariant Schreier graph $\Gamma$---namely \emph{soficity}---the averages $\E(\rho_{A,r})$ can get arbitrarily small only if $\Gamma$ exhibits a rather ``wild'' geometry. To be a little more precise, we will introduce a notion which we call \emph{relative thinness} and show that the average densities $\E(\rho_{A,r})$ can get arbitrarily small only if $\Gamma$ is arbitrarily relatively thin at different scales. We are then able to deduce the conservativity of the boundary action of a sofic random subgroup via the following argument:
\begin{itemize}
\item[i.] If $\Gamma$ satisfies property $D$ (and is not the Dirac measure concentrated on the Cayley graph of $\F_n$, a case which is easily dealt with), then there exists a number $k\in\N$ such that the set of Schreier graphs whose roots belong to a cycle of length $k$ has positive measure, and whose density inside of $r$-neighborhoods is therefore bounded away from zero. The fact that cycles of bounded length are sufficiently dense inside of $\Gamma$ is in turn enough for us to show that $\Gamma$ must satisfy (\ref{thm}).

\item[ii.] If $\Gamma$ does not satisfy property $D$, then its geometry is such that it cannot grow too quickly; in particular, we are again able to show that $\Gamma$ must satisfy the condition (\ref{thm}).
\end{itemize}
It is worth pointing out that, if our property $D$ held for all invariant random Schreier graphs, then the argument of Theorem~\ref{conservative} would imply that the boundary action of any invariant random subgroup (sofic or not) of $\F_n$ is conservative.

\section{Sofic invariant subgroups}

The class of \emph{sofic groups}, first defined by Gromov~\cite{G} and given their name by Weiss~\cite{W}, is a large class of groups which has recently received a great deal of attention. Roughly speaking, a finitely generated group is sofic if its Cayley graph can be approximated by a sequence of finite Schreier graphs. Amenable groups (for which \emph{F\o lner sequences} determine approximating sequences) and residually finite groups (for which finite quotients serve as approximating sequences) are immediate examples of sofic groups. In fact, so large is the class of sofic groups that it is unknown whether all groups are sofic. For more on sofic groups, we refer the reader to the survey of Pestov~\cite{Pe}.

The notion of soficity, which can be formulated in terms of the weak convergence of measures, naturally generalizes to objects other than groups, such as \emph{unimodular random graphs}---see, for instance, \cite{AL}. In another context, Elek and Lippner~\cite{EL} have recently defined soficity for \emph{discrete measured equivalence relations}, a setting which subsumes invariant random Schreier graphs. To make sense of the definition, observe that the uniform probability measure on a finite Schreier graph $\Gamma$ determines an invariant measure on $\Lambda(\F_n)$, namely the uniform measure supported on the conjugacy class of the associated subgroup $\pi_1(\Gamma)$. The definition now goes as follows:
\begin{definition}\label{sofic}(Sofic random Schreier graph)
An invariant random Schreier graph $\mu$ is \emph{sofic} if there exists a sequence of finite Schreier graphs $\{\Gamma_i\}_{i\in\N}$ such that $\mu_i\to\mu$ weakly, where $\mu_i$ is the invariant measure on $\Lambda(\F_n)$ determined by $\Gamma_i$.
\end{definition}
The convergence of which we speak also goes under the name of \emph{Benjamini-Schramm convergence}, after the paper \cite{BS}. We note that (as is also done in \cite{BS}) the weak convergence of measures in Definition~\ref{sofic} can be thought of in more geometric terms as follows: Suppose first that $\Gamma$ is a Cayley graph (which becomes an invariant random Schreier graph when identified with the Dirac measure concentrated on itself). We say that a finite graph $(\Gamma',\mu)$ equipped with the uniform probability measure is an \emph{$(r,\eps)$-approximation} to $\Gamma$ if there exists a set $A\subseteq\Gamma'$ of measure $\mu(A)>1-\eps$ such that for all $x\in A$, the $r$-neighborhood of $x$ in $\Gamma'$ is isomorphic (in the category of edge-labeled graphs) to the $r$-neighborhood of the identity (or, indeed, of any other vertex) in $\Gamma$. The graph $\Gamma$ is sofic precisely if it admits an $(r,\eps)$-approximation for any pair $(r,\eps)$, where $r\in\N$ and $\eps>0$. A group $G$ is thus sofic if, given any Cayley graph $\Gamma$ of $G$, it is possible to construct finite graphs which locally look like $\Gamma$ at almost all of their points. More generally, suppose that $\Gamma$ is a random invariant Schreier graph. The distribution of $\Gamma$ naturally determines a probability measure $\mu_r$ on $\Lambda_r(\F_n)$, the set of $r$-neighborhoods of Schreier graphs of $\F_n$, and we again say that a finite graph $(\Gamma,\mu)$ equipped with the uniform probability measure is an $(r,\eps)$-approximation to $\Gamma$ if for all $U\in\Lambda_r(\F_n)$ we have $|\mu(U)-\mu_r(U)|<\eps$. Then, as before, a random invariant Schreier graph is sofic precisely if it admits finite $(r,\eps)$-approximations for any pair $(r,\eps)$.

Our definition does not take exactly the same form as the ones given, for instance, in \cite{EL} or \cite{G}. The main difference is that we require our approximating sequence to consist of bona fide Schreier graphs, and not, as is usually the case, of graphs which need not have the structure of a Schreier graph at all of their points. Let us therefore quickly show that our definition---which we feel is a bit cleaner---is in fact equivalent to the usual one.
\begin{theorem}
If there exist finite graphs $(\Gamma_i,\mu_i)$ which are a sofic approximation to $\mu$, then they may be modified to create finite Schreier graphs $(\Gamma_i',\mu_i')$ which are a sofic approximation to $\mu$.
\end{theorem}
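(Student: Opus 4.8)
The plan is to alter each $\Gamma_i$ only near the (rare) vertices at which its edge-labeling fails to encode a Schreier graph, repairing the local structure without disturbing the local statistics. Recall that a finite graph is a Schreier graph of $\F_n$ precisely when, for every generator $a_j$, the $a_j$-colored edges make $a_j$ act as a genuine permutation of the vertex set, i.e.\ every vertex carries exactly one incoming and one outgoing $a_j$-edge. Call a vertex of $\Gamma_i$ \emph{good} if its $1$-neighborhood is isomorphic to that of a bona fide Schreier graph, and \emph{bad} otherwise. Since every point of $\Lambda(\F_n)$ is a genuine Schreier graph, the types in $\Lambda_1(\F_n)$ are exactly the good ones and satisfy $\sum_{U\in\Lambda_1(\F_n)}\mu_1(U)=1$; because $\Lambda_1(\F_n)$ is finite and $\mu_i\to\mu$, the proportion $\beta_i$ of bad vertices of $\Gamma_i$ satisfies $\beta_i=1-\sum_{U\in\Lambda_1(\F_n)}\mu_i(U)\to 0$.

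First I would repair the labeling color by color. Fix a generator $a_j$. Deleting a single edge at each vertex that carries two outgoing (or two incoming) $a_j$-edges---an operation touching only bad vertices---turns the $a_j$-colored edges into a partial injection $\sigma_j$ of the vertex set. The set of vertices with no outgoing $a_j$-edge and the set with no incoming $a_j$-edge then have the same cardinality (both equal $|V(\Gamma_i)|$ minus the number of $a_j$-edges), so I may choose any bijection between them and add the corresponding $a_j$-edges, extending $\sigma_j$ to a full permutation. Performing this for each $j$ produces a finite graph $\Gamma_i'$ in which every generator acts as a permutation; thus $\Gamma_i'$ is a genuine finite Schreier graph, and by construction every added or deleted edge is incident to a bad vertex of $\Gamma_i$.

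It then remains to check that the repair leaves the $r$-local statistics essentially unchanged. The only vertices $x$ whose $r$-neighborhood can differ between $\Gamma_i$ and $\Gamma_i'$ are those within distance $r$ of a modified edge, hence within distance $r$ of a bad vertex. As $\Gamma_i'$ is $2n$-regular, at most $C(n,r)\colonequals\sum_{k=0}^{r}(2n)^k$ vertices lie within distance $r$ of any given vertex, so the number of affected vertices is at most $\beta_i\,C(n,r)\,|V(\Gamma_i)|$. Consequently $|\mu_i'(U)-\mu_i(U)|\leq\beta_i\,C(n,r)$ for every $U\in\Lambda_r(\F_n)$, and this bound tends to $0$ as $i\to\infty$ for each fixed $r$. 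Combined with $\mu_i\to\mu$, this yields $\mu_i'(U)\to\mu_r(U)$ for all $U\in\Lambda_r(\F_n)$ and all $r$; a diagonal choice of indices across $r=1,2,\ldots$ then extracts a single sequence of finite Schreier graphs $(\Gamma_i',\mu_i')$ converging weakly to $\mu$.

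The one point requiring genuine care---and the heart of the argument---is this last estimate: one must observe that completing the partial permutations \emph{arbitrarily} is harmless, since an added edge can alter the $r$-neighborhood of a vertex only when that vertex lies within distance $r$ of the edge. In particular it does not matter that the added edges may join vertices that were far apart, or even merge distinct connected components; each repair pollutes at most $C(n,r)$ nearby $r$-neighborhoods, so the total polluted proportion is $O_{n,r}(\beta_i)\to 0$. Everything else is routine bookkeeping.
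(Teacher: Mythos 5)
Your proposal takes essentially the same route as the paper's proof: isolate the vertices at which the Schreier structure fails (a vanishing proportion, by soficity), surgically complete each generator's edge set into a genuine permutation of the vertex set, and observe that the surgery can only change the $r$-neighborhoods of vertices near a modification, so the local statistics survive. The differences are cosmetic---the paper deletes the bad vertices outright and then closes up the maximal $a_j$-labeled paths that result, whereas you delete only excess edges and complete the resulting partial injections by an arbitrary bijection---and your highlighted observation, that an arbitrary and possibly long-range completion is harmless because an added edge pollutes only the $r$-balls that meet it, is correct and is indeed the crux.

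There is, however, one claim that is false as stated and on which your count leans: ``by construction every added or deleted edge is incident to a bad vertex of $\Gamma_i$.'' Deleted edges are, but added edges join \emph{deficient} vertices, and a good vertex becomes deficient when a deletion at an adjacent bad vertex removes its unique incoming or outgoing $a_j$-edge; an added edge may therefore join two vertices that are both good. This is not repaired merely by replacing $C(n,r)$ with $C(n,r+1)$: the approximating graphs $\Gamma_i$ carry no a priori degree bound (the paper's remark explicitly allows vertices whose degree is not $2n$), so a single bad vertex of enormous degree can create enormously many deficient good vertices, and then the bound $\beta_i\,C(n,r)\,|V(\Gamma_i)|$ on the polluted set collapses, because $\beta_i$ counts bad vertices, not deficient ones. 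The fix is the same observation the paper makes for its set $A'$: any vertex adjacent to a bad vertex is itself not $2$-good, since its $2$-neighborhood exhibits the defective $1$-neighborhood of its bad neighbor. Consequently every vertex incident to a modified edge lies within $\Gamma_i$-distance $1$ of a bad vertex, so any vertex whose $r$-neighborhood changes lies within $\Gamma_i$-distance $r+1$ of a bad vertex and is therefore not $(r+2)$-good in $\Gamma_i$; the proportion of such vertices tends to zero by soficity itself (weak convergence over the finite set $\Lambda_{r+2}(\F_n)$), not by any degree count. With that substitution your estimate $|\mu_i'(U)-\mu_i(U)|\to 0$ for each fixed $r$, and hence the convergence $\mu_i'\to\mu$, goes through as written.
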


\begin{remark}
Here the graphs $\Gamma_i$ need not have the structure of a Schreier graph at each of their points, i.e.\ there may exist points whose degree is not $2n$ or are such that the edges attached to them do not have a Schreier labeling. Another caveat that should be pointed out is that a Schreier graph is by definition connected and rooted, although we do not actually impose these conditions in Definition~\ref{sofic} or the above proposition: there is no sense in assigning a root to the graphs of a sofic approximation (as every vertex is effectively treated as a root), and it is often natural for such graphs to have several connected components (e.g.\ if the measure they approximate is supported on a set of several distinct Cayley graphs).
\end{remark}

\begin{proof}
Let $\Gamma_i$ be an $(r,\eps)$-approximation to $\mu$ and $A\subseteq\Gamma_i$ the set of points at which $\Gamma_i$ does not have the structure of a Schreier graph. Let $\Gamma_i'$ be the subgraph of $\Gamma_i$ induced by the set $\Gamma_i\backslash A$ and $A'\subseteq\Gamma_i'$ the set of points at which $\Gamma_i'$ does not have the structure of a Schreier graph. Note that $A'$ is a subset of the set of neighbors of the removed set $A$, and that therefore $\mu_i(A\cup A')<\eps$ (since the $r$-neighborhood $U_r(x)\subseteq\Gamma_i$ of any point $x\in A$ does not approximate $\mu$, neither does  the $r$-neighborhood of any neighbor of $x$, provided $r>1$).

Now, the edges attached to points $x\in A'$ are properly labeled with the generators $a_1,\ldots,a_n$ of $\F_n$---the only problem is that some generators may be missing, i.e.\ it may be that $\deg(x)<2n$. We thus ``stitch up'' the graph $\Gamma_i'$ as follows: for every generator $a_i$ which does not label any of the edges (neither incoming nor outgoing) attached to a given point $x\in A'$, add a loop to $x$ and label it with $a_i$. If, on the other hand, there exists precisely one edge (assume without loss of generality that it is outgoing) attached to $x$ and labeled with a generator $a_i$, then consider the longest path $\gamma$ whose edges are labeled only with $a_i$ and which is attached to $x$. The endpoint of $\gamma$ will be a vertex $y\in A'$ distinct from $x$; to ``complete the cycle,'' we thus need only join $x$ and $y$ with an edge and label this edge with $a_i$ in the obvious way. By repeating this procedure for every vertex in $A'$, we ensure that $\Gamma_i'$ has the structure of a Schreier graph at every point while modifying it only on a set of very small measure. It follows that the sequence of Schreier graphs $(\Gamma_i',\mu_i')$ is a sofic approximation to $\mu$.
\end{proof}

Note that Definition~\ref{sofic} readily generalizes to invariant random fields: one must simply define convergence with respect to finite $\{0,1\}$-labeled Schreier graphs. We will make use of the following lemma later.
\begin{lemma}\label{soficfield}
Let $\mu$ be a sofic random Schreier graph and $A\subseteq\Lambda(\F_n)$ a Borel set. Then the invariant random field $(\Theta_A)_*\mu$ is also sofic.
\end{lemma}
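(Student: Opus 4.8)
The plan is to transport the soficity of $\mu$ from the level of underlying Schreier graphs to the level of fields by first replacing the arbitrary Borel set $A$ with a \emph{local} approximation. The obstruction to doing this directly is that the value $\FF(x)=\mathbf 1_A(\Gamma,x)$ of the field $\Theta_A(\Gamma)$ depends on the whole (typically infinite) rerooted graph $(\Gamma,x)$ and so cannot be read off any finite graph. To get around this I would use the regularity of $\mu$ on the compact Polish space $\Lambda(\F_n)$: since the cylinder sets form a clopen basis, for any $\delta>0$ there is a finite union of cylinders $B$, say of level $s$, with $\mu(A\triangle B)<\delta$. Because $B$ is a level-$s$ cylinder set, the indicator $(\Gamma,y)\mapsto\mathbf 1_B(\Gamma,y)$ depends only on the $s$-neighborhood $U_s(y)$, so the associated field $\Theta_B(\Gamma)$ is genuinely local.

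Now fix a target pair $(r,\eps)$. Using soficity of $\mu$, choose a finite Schreier graph $\Gamma_i$ that is an $(r+s,\eps')$-approximation to $\mu$ in the sense of Definition~\ref{sofic}, and label it by the rule $\FF_i(y)=\mathbf 1_B(U_s(y))$; this is well defined because every $s$-neighborhood in a genuine Schreier graph is an honest element of $\Lambda_s(\F_n)$. I claim the $\{0,1\}$-labeled Schreier graphs $(\Gamma_i,\FF_i)$ form a sofic approximation to $(\Theta_A)_*\mu$. The key structural point is that, since $\FF_i$ is determined $s$-locally, the labeled $r$-ball around a vertex $x$ is a function of the $(r+s)$-neighborhood $U_{r+s}(x)$; hence for each labeled $r$-neighborhood $(U,\phi)$ the empirical frequency of $(U,\phi)$ in $(\Gamma_i,\FF_i)$ equals $\mu_i$ of the finite union of those level-$(r+s)$ cylinders that yield $(U,\phi)$ under the $B$-labeling. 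By the $(r+s,\eps')$-approximation this empirical frequency differs from $\mu$ of the same union by at most $|\Lambda_{r+s}(\F_n)|\cdot\eps'$ (the set $\Lambda_{r+s}(\F_n)$ being finite), and $\mu$ of that union is exactly the $(\Theta_B)_*\mu$-measure of $(U,\phi)$.

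It remains to compare $(\Theta_B)_*\mu$ with $(\Theta_A)_*\mu=\nu$, and this is where invariance does the essential work and where I expect the only real subtlety to lie. Fixing $(U,\phi)$, the events ``the $A$-labeled $r$-ball at $x$ equals $(U,\phi)$'' and ``the $B$-labeled $r$-ball at $x$ equals $(U,\phi)$'' differ only inside $\{(\Gamma,x):(\Gamma,y)\in A\triangle B\text{ for some }y\in U_r(x)\}$. Writing $y=x\cdot w$ for a reduced word $w$ of length at most $r$ and using that $\mu$ is invariant under the corresponding shift of the root, each single-vertex event $\{(\Gamma,x):(\Gamma,x\cdot w)\in A\triangle B\}$ has measure $\mu(A\triangle B)<\delta$; a union bound over the at most $|U_r(\F_n,e)|$ such words gives $|(\Theta_B)_*\mu(U,\phi)-\nu(U,\phi)|<|U_r(\F_n,e)|\cdot\delta$. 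Since $|U_r(\F_n,e)|$ depends only on the prescribed radius $r$, choosing first $\delta$ and then $\eps'$ sufficiently small makes the total error on every labeled $r$-neighborhood smaller than $\eps$, which is exactly what soficity of $(\Theta_A)_*\mu$ demands. The main conceptual obstacle is therefore the passage from the single-vertex bound $\mu(A\triangle B)<\delta$ to a bound on the entire labeled ball, and invariance is precisely the ingredient that lets this error be spread uniformly over the boundedly many vertices of the ball.
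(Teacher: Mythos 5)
Your proposal is correct, and at the top level it follows the same strategy as the paper's proof: replace the Borel set $A$ by a union of cylinder sets so that the labeling becomes local, transfer that local rule to the finite Schreier graphs of a sofic approximation of $\mu$, and diagonalize. The difference lies in the key approximation lemma, and here your route is genuinely better. The paper takes $A_r$ to be the union of all level-$r$ cylinders $U$ with $\mu(A\cap U)>0$ and asserts that $\mu(A_r\setminus A)\to 0$; as stated this can fail, e.g.\ when $A$ is the complement of a closed, positive-measure set built (fat-Cantor-style) so that every positive-measure cylinder still meets $A$ in positive measure --- then $A_r$ is everything for every $r$, and $\mu(A_r\setminus A)$ stays bounded away from $0$. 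Your substitute, the measure-algebra approximation $\mu(A\triangle B)<\delta$ with $B$ a finite union of level-$s$ cylinders, is the statement that is actually true and is all the argument needs. Two further points where your write-up is tighter: you keep the cylinder level $s$ separate from the target radius $r$ and approximate at radius $r+s$, which is the correct bookkeeping (the paper conflates the two); and you make explicit the step the paper passes over silently, namely that comparing $(\Theta_B)_*\mu$ with $(\Theta_A)_*\mu$ on labeled $r$-balls requires a union bound over the $|U_r(\F_n,e)|$ rerootings of the single-vertex error, with the invariance of $\mu$ being exactly what makes each shifted event have measure $<\delta$; the paper's error term ``$\eps+\eps_r$'' omits this factor. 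In short, what the paper's version buys is brevity; what yours buys is an argument that survives scrutiny, at the cost of tracking one extra parameter.
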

\begin{proof}
Denote by $A_r$ the collection of cylinder sets $U\in\Lambda_r(\F_n)$ such that $\mu(A\cap U)>0$. Clearly, $A\subseteq A_r$, and moreover $\mu(A_r\backslash A)\equalscolon\eps_r\to0$, i.e.\ the sets $A_r$ approximate $A$. Let $\Gamma$ be a finite $(r,\eps)$-approximation to $\mu$, and construct a binary field $\FF:\Gamma\to\{0,1\}$ by assigning to a given vertex $x\in\Gamma$ the value $1$ if the cylinder set corresponding to its $r$-neighborhood $U_r(x)$ belongs to $A_r$ and the value $0$ otherwise. Then $\FF$ is an $(r,\eps)$-approximation to $\mu_r$ and hence an $(r,\eps+\eps_r)$-approximation to $\mu$. By constructing fields $\FF_i$ in this way for a sequence of finite graphs $\Gamma_i$ which are $(r_i,\eps_i)$-approximations to $\mu$, with $r_i\to\infty$ and $\eps_i\to0$, we obtain a sofic approximation to $(\Theta_A)_*\mu$.
\end{proof}
Morally speaking, Lemma~\ref{soficfield} allows us to phrase Question~\ref{q} in terms of finite graphs, namely those which come from a sofic approximation. Working with finite graphs in turn has several advantages, as we show in the next section.

\section{Relative thinness}

\ni In order to investigate Question~\ref{q}, we would like to introduce a notion which we call \emph{relative thinness}. To be more precise, let $\Gamma$ be a Schreier graph, and consider the functions $\tau_r:\Gamma\to\Q$ defined by
\[
\tau_r(x)\colonequals\sum_{y\in U_r(x)}\frac{1}{|U_r(y)|}.
\]
Note that if, say, all of the $r$-neighborhoods of $\Gamma$ have the same size (as is the case, for instance, when $\Gamma$ is a Cayley graph), then $\tau_r\equiv1$. If, on the other hand, the $r$-neighborhood of a point $x\in\Gamma$ is small compared to the $r$-neighborhoods near it, then one will have $\tau_r(x)<1$ (and if it is large compared to the $r$-neighborhoods near it, then one will have $\tau_r(x)>1$). We thus say that a Schreier graph $\Gamma$ is \emph{relatively thin at scale $r$} at a point $x\in\Gamma$ if $\tau_r(x)<1$ (if a piece of cloth is worn down at a particular spot, then the regions surrounding that spot will have more mass than is to be found at the spot itself).

One feature of relative thinness is that it is ``tempered,'' meaning that if $\Gamma$ is very thin at $x$ and $y$ is a neighbor of $x$, then $\Gamma$ will be thin at $y$ as well. To be more precise, let us say that a function $f:\Gamma\to\R$ is \emph{$C$-Lipschitz} if whenever $x,y\in\Gamma$ are neighbors,
\[
f(x)\leq Cf(y)
\]
for some constant $C\geq1$. Likewise, we say that a family of functions $\{f_i:\Gamma_i\to\R\}_{i\in I}$ is \emph{uniformly $C$-Lipschitz} over the family of graphs $\{\Gamma_i\}_{i\in\N}$ if each $f_i$ is $C$-Lipschitz for some constant $C\geq1$ that does not depend on $i$. We now have the following lemma.
\begin{lemma}\label{lip}
Let $\Gamma\in\Lambda$ be a Schreier graph of $\F_n$. Then there exists a constant $C\geq1$ such that the family of functions $\{\tau_r\}_{r\in\N}$ is uniformly $C$-Lipschitz over $\Gamma$.
\end{lemma}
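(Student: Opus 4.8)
The plan is to reduce the assertion to a single geometric fact---that neighbourhoods in a $2n$-regular graph cannot grow faster than geometrically---and then to upgrade an \emph{additive} comparison of the two sums $\tau_r(x)$ and $\tau_r(y)$ into the required \emph{multiplicative} Lipschitz bound by means of an injection argument. Throughout, $C$ will depend only on $n$, and in particular not on $r$, which is what ``uniformly over $\Gamma$'' demands.

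First I would record the growth estimate. Since every vertex of $\Gamma$ has degree $2n$, each sphere obeys $|S_{r+1}(z)|\leq 2n\,|S_r(z)|$: every vertex at distance $r+1$ from $z$ is adjacent to $S_r(z)$, and at most $2n|S_r(z)|$ edges emanate from $S_r(z)$. Hence $|U_{r+1}(z)|\leq(2n+1)|U_r(z)|$ for every $z$ and every $r\geq0$. Applying this together with the containment $U_r(z')\subseteq U_{r+1}(z)$, valid whenever $z,z'$ are adjacent, shows that adjacent vertices have comparable neighbourhood sizes, namely $|U_r(z')|\leq(2n+1)|U_r(z)|$. This is the only point at which the regularity of $\Gamma$ is used.

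Next, fix adjacent $x,y$ and take $r\geq1$ (the case $r=0$ is immediate, as $\tau_0\equiv1$). The summand $1/|U_r(z)|$ is identical in $\tau_r(x)$ and $\tau_r(y)$ for each common vertex $z\in U_r(x)\cap U_r(y)$, so the two quantities differ only through the symmetric difference:
\[
\tau_r(x)\leq\tau_r(y)+\sum_{z\in U_r(x)\setminus U_r(y)}\frac{1}{|U_r(z)|}.
\]
A short distance computation shows that every such $z$ has $d(x,z)=r$, and therefore a neighbour $z'$ with $d(x,z')=r-1$; this $z'$ lies in $U_{r-1}(x)\subseteq U_r(y)$. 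By the comparability of neighbourhood sizes we have $1/|U_r(z)|\leq(2n+1)/|U_r(z')|$, and since a given $z'$ has at most $2n$ neighbours, at most $2n$ distinct $z$ are assigned to it. The excess sum is thus bounded by $2n(2n+1)\,\tau_r(y)$, giving $\tau_r(x)\leq C\,\tau_r(y)$ with $C=1+2n(2n+1)$; exchanging the roles of $x$ and $y$ yields the reverse inequality, so $\tau_r$ is $C$-Lipschitz for all $r$.

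I expect the crux to be exactly this passage from an additive to a multiplicative estimate. The natural comparison of $\tau_r(x)$ and $\tau_r(y)$ produces an additive error supported on the sphere $S_r(x)$, and a naive bound on those terms is useless, since one has no absolute lower bound on $\tau_r(y)$. The resolution is the injection $z\mapsto z'$ into $U_r(y)$, which simultaneously distorts the weights by a bounded factor (via the growth bound) and has bounded multiplicity (via $2n$-regularity); this is precisely what permits the excess to be reabsorbed as a constant multiple of $\tau_r(y)$ itself, rather than surviving as an uncontrolled additive term.
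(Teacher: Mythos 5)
Your proof is correct and follows essentially the same route as the paper's: both compare $\tau_r$ at adjacent vertices through the symmetric difference of their $r$-neighborhoods, assigning to each excess vertex $z$ a neighbor $z'$ lying in the other $r$-neighborhood, and reabsorbing the excess sum as a constant multiple of $\tau_r$ by combining the comparability of neighborhood sizes at adjacent vertices with the bounded multiplicity of the assignment $z\mapsto z'$. The differences are only cosmetic: the paper works with $U_r(y)\setminus U_r(x)$, quotes the comparability constant $2n-1$ rather than your (more carefully justified) $2n+1$, and thus lands on $C=(2n-1)^2+1$ instead of your $C=1+2n(2n+1)$.
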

\begin{proof}
Note first that if $x$ and $y$ are neighbors in $\Gamma$, then we have the bound
\begin{align}\label{nbrsize}
|U_r(x)|\geq\frac{1}{2n-1}|U_r(y)|.
\end{align}
Put $S\colonequals U_r(y)\backslash U_r(x)$, and let $S'$ denote a choice, for each vertex $z\in S$, of a neighbor $z'$ which belongs to $U_r(x)$. Then
\begin{align*}
\tau_r(y)-\tau_r(x)&\leq\sum_{z\in S}\frac{1}{|U_r(z)|}\\
&\leq(2n-1)^2\sum_{z\in S'}\frac{1}{|U_r(z)|}\\
&\leq(2n-1)^2\tau_r(x).
\end{align*}
Here the second line is obtained by applying the inequality (\ref{nbrsize}) and using the fact that points in $S'$ may have at most $2n-1$ neighbors in $S$. It follows that each $\tau_r$ is $C$-Lipschitz with $C=(2n-1)^2+1$.
\end{proof}
Moreover, it turns out that, at least in the model case of a finite Schreier graph (which carries a unique invariant probability measure), thinness and the densities $\rho_{A,r}$ given by (\ref{density}) are directly related to one another.
\begin{proposition}\label{prop1}
Let $(\Gamma,A,\mu)$ be a finite Schreier graph $\Gamma$ equipped with the uniform probability measure, together with a subset $A\subseteq\Gamma$. Then
\[
\int_\Gamma\rho_{A,r}\,d\mu=\int_A\tau_r\,d\mu,
\]
where $\rho_{A,r}$ is the $r$-neighborhood density of the set $A$.
\end{proposition}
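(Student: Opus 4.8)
The plan is to expand both sides as sums over vertices and recognize them as the same double sum, the only input being the symmetry of the neighborhood relation. Since $\Gamma$ is finite, set $N\colonequals|\Gamma|$, so that $\mu$ assigns mass $1/N$ to each vertex and every sum below is finite, hence freely reorderable. First I would unwind the left-hand side: by the definition (\ref{density}) of $\rho_{A,r}$, and writing the cardinality $|A\cap U_r(x)|$ as a sum of indicators over $A$, we have
\[
\int_\Gamma\rho_{A,r}\,d\mu=\frac{1}{N}\sum_{x\in\Gamma}\frac{|A\cap U_r(x)|}{|U_r(x)|}=\frac{1}{N}\sum_{x\in\Gamma}\frac{1}{|U_r(x)|}\sum_{y\in A}\mathbf{1}[y\in U_r(x)].
\]

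The crux is the observation that the neighborhood relation is symmetric: because the graph distance on a Schreier graph is symmetric (each generator may be traversed in either direction), one has $y\in U_r(x)$ if and only if $x\in U_r(y)$. I would use this to replace the condition $y\in U_r(x)$ by $x\in U_r(y)$ and then interchange the order of the two finite summations, arriving at
\[
\int_\Gamma\rho_{A,r}\,d\mu=\frac{1}{N}\sum_{y\in A}\sum_{x\in U_r(y)}\frac{1}{|U_r(x)|}.
\]

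Finally I would recognize the inner sum as precisely $\tau_r(y)$, since $\tau_r(y)=\sum_{z\in U_r(y)}1/|U_r(z)|$ by definition, whence the right-hand side becomes $\frac{1}{N}\sum_{y\in A}\tau_r(y)=\int_A\tau_r\,d\mu$, as claimed. There is no genuine obstacle here: the entire content is the symmetry $y\in U_r(x)\iff x\in U_r(y)$ together with the trivial Fubini interchange for finite sums. The only point that deserves a moment's care is justifying that symmetry from the fact that $r$-neighborhoods are defined via the (symmetric) graph metric rather than via directed reachability along the edge-orientation; once this is noted, the identity is a direct double-counting.
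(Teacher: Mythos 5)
Your proof is correct and is essentially the paper's own argument, written out in full: the paper's one-line proof is precisely the double-counting observation that each quantity $1/|U_r(x)|$ appears once on either side for every pair $(x,y)$ with $y\in A$ and $x\in U_r(y)$, which is your Fubini interchange plus the symmetry $y\in U_r(x)\iff x\in U_r(y)$. Your explicit attention to why the neighborhood relation is symmetric is a reasonable elaboration, not a departure.
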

\begin{proof}
One must simply observe that, whether summing $\rho_{A,r}$ over $\Gamma$ or $\tau_r$ over $A$, for a given point $x\in\Gamma$ the quantity $1/|U_r(x)|$ is summed exactly once for every point $y\in A$ such that $x\in U_r(y)$.
\end{proof}
\ni As a corollary, we obtain:
\begin{corollary}\label{one}
Given a finite Schreier graph $(\Gamma,\mu)$ equipped with the uniform probability measure, $\tau_r$ integrates to one over $\Gamma$.
\end{corollary}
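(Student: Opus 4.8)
The plan is to deduce the statement directly from Proposition~\ref{prop1} by making the single most natural choice of subset, namely $A=\Gamma$ (the entire vertex set of the finite Schreier graph). The key observation is that with this choice the density function $\rho_{\Gamma,r}$ is identically equal to one: for every vertex $x$ we have $U_r(x)\subseteq\Gamma$, so that $|A\cap U_r(x)|=|U_r(x)|$ and hence $\rho_{\Gamma,r}(x)=|U_r(x)|/|U_r(x)|=1$. Once this is in hand, the corollary falls out immediately.

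Indeed, applying Proposition~\ref{prop1} with $A=\Gamma$ gives
\[
\int_\Gamma\tau_r\,d\mu=\int_A\tau_r\,d\mu=\int_\Gamma\rho_{\Gamma,r}\,d\mu=\int_\Gamma1\,d\mu=\mu(\Gamma)=1,
\]
where the final equality uses that $\mu$ is a probability measure. There is no real obstacle to overcome: the entire content of the corollary is carried by Proposition~\ref{prop1}, and the only thing one must notice is that integrating the identically-one density $\rho_{\Gamma,r}$ simply recovers the total mass of the graph.

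If one prefers to argue from scratch rather than invoke Proposition~\ref{prop1}, the same fact follows from the double-counting identity that underlies it. Summing $\tau_r(x)=\sum_{y\in U_r(x)}1/|U_r(y)|$ over all $x\in\Gamma$ and using that the relation $y\in U_r(x)$ is symmetric (so that $y\in U_r(x)$ if and only if $x\in U_r(y)$), each weight $1/|U_r(y)|$ is counted exactly $|U_r(y)|$ times, once for each $x\in U_r(y)$. Hence $\sum_{x\in\Gamma}\tau_r(x)=\sum_{y\in\Gamma}1=|\Gamma|$, and dividing by $|\Gamma|$ to pass to the uniform measure yields $\int_\Gamma\tau_r\,d\mu=1$. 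I expect this to be the short closing observation of the section rather than a step requiring any genuine work.
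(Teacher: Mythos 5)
Your proposal is correct and is exactly the paper's proof: choose $A=\Gamma$ in Proposition~\ref{prop1}, note $\rho_{A,r}\equiv1$, and conclude $\int_\Gamma\tau_r\,d\mu=\int_\Gamma 1\,d\mu=1$. Your supplementary double-counting argument is also valid, but it merely re-derives the special case of Proposition~\ref{prop1} that the corollary uses, so it adds nothing beyond the paper's one-line deduction.
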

\begin{proof}
Simply choose $A=\Gamma$ in the hypotheses of Proposition~\ref{prop1}. Then $\rho_{A,r}\equiv1$, so that we have
\begin{align*}
\int_\Gamma\tau_r\,d\mu=\int_\Gamma\rho_{A,r}\,d\mu=\int_\Gamma1\,d\mu=1.\tag*{\qedhere}
\end{align*}
\end{proof}
We thus find that the ``average thinness'' of a finite Schreier graph is always one. Proposition~\ref{prop1} can therefore be interpreted as saying that, if the average of $\rho_{A,r}$ over a finite Schreier graph $\Gamma$ is small relative to $\E(\rho_{A,0})=\mu(A)$, then the set $A$ must be concentrated at points where $\Gamma$ is relatively thin (at scale $r$).

Corollary~\ref{one} tells us that, if $\Gamma$ is a finite Schreier graph, then by integrating the functions $\tau_r$ against the uniform probability measure on $\Gamma$, we obtain a new probability measure $\nu_r$. Suppose now that $\mu$ is a sofic random Schreier graph, and let $\{\Gamma_i\}_{i\in\N}$ be a sofic approximation to $\mu$. Then one readily verifies that the sequence of probability measures $\nu_{r,i}$---those obtained by integrating $\tau_r$ against the uniform measures $\mu_i$---converges weakly to a probability measure $\nu_r$ on $\Lambda(\F_n)$. That is, soficity implies that $\tau_r$ is a density with respect to $\mu$.
\begin{proposition}\label{ergseq}
Let $\mu$ be a sofic random Schreier graph which is ergodic and which does not satisfy property $D$. Then there exist finite Schreier graphs $(\Gamma_i,A_i,\mu_i)$ together with subsets $A_i\subseteq\Gamma_i$ such that the $\Gamma_i$ are a sofic approximation to $\mu$, $\mu_i(A_i)\to1$, and $\E(\tau_i\mid A_i)\to0$.
\end{proposition}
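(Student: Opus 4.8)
The plan is to extract a thin set of positive measure from the failure of property $D$, to enlarge it to a set of measure approaching one using ergodicity, and to keep the enlargement thin by invoking the tempered (Lipschitz) behaviour of $\tau_r$ from Lemma~\ref{lip}. First I would unwind the hypothesis. Since $\mu$ fails property $D$, there is a Borel set $A\subseteq\Lambda(\F_n)$ with $\mu(A)>0$ and radii $r_k\to\infty$ for which $\E(\rho_{A,r_k})\to 0$. By Proposition~\ref{prop1}, passed to the sofic limit (recall that soficity makes $\tau_r$ a density with respect to $\mu$), this is the same as $\int_A\tau_{r_k}\,d\mu\to 0$; that is, $A$ is a fixed positive-measure set on which $\Gamma$ becomes arbitrarily relatively thin as the scale grows. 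The task is then to manufacture from $A$ finite-graph sets $A_i$ that are simultaneously \emph{large} (measure $\to 1$) and \emph{thin on average} ($\E(\tau_i\mid A_i)\to 0$).

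Next I would enlarge $A$. For $s\in\N$ set $A^{(s)}=\{(\Gamma,x):U_s(x)\cap A\neq\emptyset\}$, the points lying within distance $s$ of $A$. The increasing union $\bigcup_s A^{(s)}$ is the set of rooted graphs whose component meets $A$; this set is invariant under rerooting and contains $A$, so it has positive measure, and ergodicity forces it to have full measure. Hence $\mu(A^{(s)})\uparrow 1$ as $s\to\infty$. To see that passing from $A$ to $A^{(s)}$ does not destroy thinness too badly, I would chain the $C$-Lipschitz estimate of Lemma~\ref{lip} along a path of length at most $s$ to get $\tau_r(x)\le C^s\tau_r(x_0)$ whenever $x_0\in A$ witnesses $x\in A^{(s)}$. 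Since each such $x_0$ is the witness of at most $|U_s(x_0)|\le N_s$ points, with $N_s\le(2n)^{s+1}$ depending only on $n$ and $s$, summing gives, on every finite Schreier graph and hence in the sofic limit,
\[
\int_{A^{(s)}}\tau_r\,d\mu\le C^s N_s\int_A\tau_r\,d\mu,
\]
where $K_s\colonequals C^s N_s$ is a constant. For each fixed $s$ this constant is harmless, so $\int_{A^{(s)}}\tau_{r_k}\,d\mu\le K_s\int_A\tau_{r_k}\,d\mu\to 0$ as $k\to\infty$.

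Finally I would assemble the construction by a diagonal argument. Fixing a sofic approximation $\{\Gamma_i\}$ and representing $A$, hence each $A^{(s)}$, inside the $\Gamma_i$ via the fields of Lemma~\ref{soficfield}, the quantities $\mu_i(A^{(s)})$ and $\int_{A^{(s)}}\tau_r\,d\mu_i$ converge as $i\to\infty$ to their $\mu$-values (both are cylinder functions of the field, so weak convergence applies). Given $\delta>0$, I would first choose $s$ with $\mu(A^{(s)})>1-\delta$, then $k$ with $\int_{A^{(s)}}\tau_{r_k}\,d\mu<\delta$, and only then $i$ large enough that the finite-graph values lie within $\delta$ of these. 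Letting $\delta\to 0$ along a diagonal sequence and setting $A_i\colonequals A^{(s)}$ and $\tau_i\colonequals\tau_{r_k}$ for the chosen parameters yields finite Schreier graphs with $\mu_i(A_i)\to 1$ and $\int_{A_i}\tau_i\,d\mu_i\to 0$, whence $\E(\tau_i\mid A_i)=\mu_i(A_i)^{-1}\int_{A_i}\tau_i\,d\mu_i\to 0$, as required.

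The main obstacle, and the step requiring the most care, is the order of the three limits. The enlargement radius $s$ must be frozen before the scale $r_k$ is sent to infinity, because the transfer constant $K_s$ blows up with $s$; yet $s$ must be taken large to make $A^{(s)}$ nearly full; and the sofic index $i$ must be treated last so that everything is realized on genuine finite Schreier graphs. Balancing ``$A^{(s)}$ large'' against ``$A^{(s)}$ thin'' is precisely where both ergodicity (which inflates $A$ to full measure) and the tempered nature of relative thinness from Lemma~\ref{lip} (which keeps the enlargement thin for each fixed $s$) are indispensable.
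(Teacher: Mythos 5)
Your proposal is correct and follows essentially the same route as the paper: extract a positive-measure set $A$ with $\E(\tau_{r_k}\mid A)\to 0$ from the failure of property $D$, enlarge it to nearly full measure using ergodicity, control the enlargement's thinness via the uniform Lipschitz bound of Lemma~\ref{lip}, and finish with Lemma~\ref{soficfield} plus a diagonal argument. Your metric enlargement $A^{(s)}$ is literally the same set as the paper's union of translates $A\cup g_1A\cup\cdots\cup g_kA$ over all words of length at most $s$ (moving the root along a word of length $\leq s$ is exactly rerooting within distance $s$), so the only differences are cosmetic: your transfer constant $C^sN_s$ versus the paper's $C^m$, and your more explicit bookkeeping of the order of limits.
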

\begin{proof}
If $\mu$ does not satisfy property $D$, then there exists a set $A\subseteq\Lambda(\F_n)$ with $\mu(A)>0$ such that $\E(\rho_{A,r})\to0$ along some subsequence of radii $r\in\N$, and hence such that $\E(\tau_r\mid A)\to0$. Let $\{g_i\}_{i\in\N}$ be an enumeration of $\F_n$ (e.g.\ the lexicographic order), and put
\[
A_k\colonequals A\cup g_1A\cup\ldots\cup g_kA.
\]
It follows from the fact that the $\tau_r$ are uniformly $C$-Lipschitz (Proposition~\ref{lip}) that $\E(\tau_r\mid A_k)\to0$ for any $k$. Indeed, putting 
\[
m\colonequals\max_{1\leq i\leq k}|g_i|,
\]
we have $\E(\tau_r\mid A_k)\leq C^m\E(\tau_r\mid A)\to0$. Moreover, by ergodicity, $\mu(A_k)\to1$. By Lemma~\ref{soficfield}, there exists a sofic approximation $\{\FF_{i,k}\}_{i\in\N}$ for each invariant random field $(\Theta_{A_k})_*$, which is the same thing as a sequence of finite Schreier graphs $(\Gamma_{i,k},A_{i,k},\mu_{i,k})$ such that the $A_{i,k}$ approximate $A_k$ (just take $A_{i,k}=\{x\in\Gamma_{i,k}\mid\FF_{i,k}(x)=1\}$). By choosing an appropriate diagonal sequence, we prove our claim.
\end{proof}

Suppose again that $\mu$ is a sofic random Schreier graph which is ergodic and does not satisfy property $D$. Our next goal is to show that the geometry of $\mu$ must be quite peculiar. To do so, we will look at the sofic approximation to $\mu$ guaranteed by Proposition~\ref{ergseq}, i.e.\ the sequence of finite Schreier graphs $(\Gamma_i,A_i,\mu_i)$, with $\mu_i(A_i)\to1$ and $\E(\tau_i\mid A_i)\to0$. A trick we will emply is the following: instead of working with the functions $\tau_r$ and letting $r$ vary, we may instead modify the structure of our Schreier graphs and work only with the function $\tau_1$. Thus if $\Gamma_i$ is one of our Schreier graphs (constructed, by default, with respect to the standard generating set $\A=\{a_1,\ldots,a_n$\}), denote by $\Gamma_i^{(r)}$ what we call the \emph{$r$-contraction of $\Gamma_i$} obtained by regarding it as a Schreier graph of $\F_n$ constructed with respect to the generating set consisting of all group elements of length less than or equal to $r$. One readily verifies that $\tau_r$ over $\Gamma$ agrees with $\tau_1$ over $\Gamma^{(r)}$, in the sense that the diagram
\begin{equation*}
\begin{tikzcd}
\Gamma_i\arrow[hookrightarrow]{r}\arrow{dr}[swap]{\tau_r}
&\Gamma_i^{(r)}\arrow{d}{\tau_1}\\
&\Q
\end{tikzcd}
\end{equation*}
commutes (here the upper arrow is the obvious identification between the vertices of $\Gamma_i$ and the vertices of $\Gamma_i^{(r)}$). By modifying the structure of our graphs in this way (for ever larger values of $r$) and choosing an appropriate diagonal sequence, our sofic approximation now takes the form of a sequence of finite Schreier graphs $(\Gamma_i,A_i,\mu_i)$ such that $\mu_i(A_i)\to1$ and $\E(\tau_1\mid A_i)\to0$.

We do not know of any invariant random Schreier graph which fails to have property $D$. In order to get a sense of what a sequence of graphs satisfying the aforementioned conditions might look like, however, consider the following example.
\begin{example}\label{bip}
Let $X_N$ be a set of $2^N$ points and $Y_N$ a set of $N$ points, and let $\Gamma_N$ denote the complete bipartite graph between $X_N$ and $Y_N$, i.e.\ the graph obtained by adding to the set $X_N\sqcup Y_N$ all possible edges $(x,y)$ such that $x\in X_N$ and $y\in Y_N$. Then the sequence of graphs $(\Gamma_N,X_N,\mu_N)$ has the property that $\E(\tau_1\mid X_N)\to0$. Indeed, it is easy to see that for fixed $N$, $\tau_1$ is constant over each of $X_N$ and $Y_N$, and that $\left.\tau_1\right|_{X_N}\to0$ whereas $\left.\tau_1\right|_{Y_N}\to\infty$. At the same time, we have $\mu_N(X_N)\to1$.
\end{example}
Note, however, that the graphs constructed in Example~\ref{bip} cannot be realized as a sequence of contracted Schreier graphs. Indeed, suppose that, possibly upon adding loops to the vertices of the bipartite graphs of Example~\ref{bip} and turning some of their edges into multi-edges, we were able to label their edges with generators of $\F_n$. Then for each vertex $x\in X_N$, it must be the case that one of its ``external edges,'' meaning an edge $(x,y)$ with $y\in Y_N$, is labeled with one of the standard generators $a_1,\ldots,a_n$ (or one of their inverses)---were this not the case, $x$ would be fixed by every $a_i$ and hence by $\F_n$ itself, a contradiction, since $x$ has $\F_n$-labeled external edges attached to it. By the pigeonhole principle, there must thus exist a generator $a_i^{\pm1}$ and a subset $X_N'\subseteq X_N$ of measure $\mu_N(X_N')\geq\mu_N(X_N)/2n$ such that $a_i^{\pm1}X_N'\subseteq Y_N$. But this is again a contradiction, since $\mu_N(Y_N)\to0$ and $\mu_N$ is an invariant measure. Alternatively, note that there is an ever widening gap between the values of $\tau_1$ over $X_N$ and $Y_N$, which violates the fact that $\tau_1$ is $C$-Lipschitz (Proposition~\ref{lip}).

The family of graphs constructed in Example~\ref{bip} has what one might call a ``lopsided structure.'' That is to say, graphs in the family split into a set of large measure and a set of small measure in such a way that all of the neighbors of a given vertex in the large set belong to the small set. The next proposition shows that, despite the fact that the bipartite graphs considered above cannot be realized as Schreier graphs, a version of this phenomenon must occur whenever $\mu$ is a sofic random Schreier graph which is ergodic and does not satisfy property $D$ (see also Figure~2).
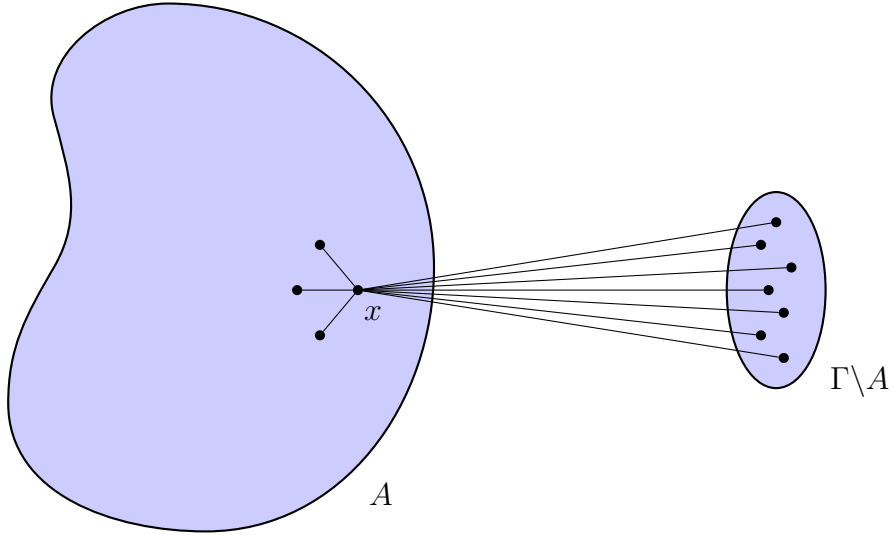
\begin{figure}\label{fig2}
\centering
\begin{tikzpicture}[main node/.style={fill,circle,thick,draw=black,inner sep=0pt,minimum size=3pt}]

\draw[thick,fill=blue!20] (-4.5,3.5) to [out=0,in=90] (-1,0) to [out=-90,in=0] (-4,-3.5)
to [out=180,in=270] (-6.6,-1.8) to [out=90,in=240] (-6,0)
to [out=60,in=-75] (-6,2) to [out=105,in=180] (-4.5,3.5);

\draw[thick,fill=blue!20] (3.5,-0.3) ellipse (0.65cm and 1.3cm);

\node at (-1.7,-3){$A$};
\node at (4.6,-1.5){$\Gamma\backslash A$};
\node at (-1.8,-0.6){$x$};

\node[main node](x) at (-2,-0.3){};
\node[main node](r) at (-2.5,0.3){};
\node[main node](s) at (-2.8,-0.3){};
\node[main node](t) at (-2.5,-0.9){};
\node[main node](1) at (3.5,0.6){};
\node[main node](2) at (3.3,0.3){};
\node[main node](3) at (3.7,0){};
\node[main node](4) at (3.4,-0.3){};
\node[main node](5) at (3.6,-0.6){};
\node[main node](6) at (3.3,-0.9){};
\node[main node](7) at (3.6,-1.2){};

\foreach \from/\to in {x/1,x/2,x/3,x/4,x/5,x/6,x/7,x/r,x/s,x/t}
\draw(\from)--(\to);
\end{tikzpicture}
\caption{Finite (contracted) Schreier graphs $\Gamma$ that approximate invariant random subgroups which do not satisfy property $D$ have a subset $A\subseteq\Gamma$ of large measure such that, for a random point $x\in A$, the large majority of its neighbors belong to the complement $\Gamma\backslash A$.}
\end{figure}
\begin{proposition}\label{lop}
Let $\mu$ be a sofic random Schreier graph which is ergodic and does not satisfy property $D$. Then there exists a sequence of finite (contracted) Schreier graphs $(\Gamma_i,A_i,\mu_i)$ such that the $\Gamma_i$ are a sofic approximation to $\mu$, $\mu_i(A_i)\to1$, and
\[
\lim_{i\to\infty}\E\left(\left.\frac{\deg_{A_i}(x)}{\deg_{\Gamma_i\backslash A_i}(x)}\,\right| A_i\right)=0,
\]
where $\deg_A(x)$ denotes the number of neighbors of $x$ in the set $A$.
\end{proposition}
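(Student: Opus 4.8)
The plan is to start from the sequence of finite contracted Schreier graphs $(\Gamma_i,A_i,\mu_i)$ furnished by Proposition~\ref{ergseq} together with the contraction trick described above, so that $\mu_i(A_i)\to1$ and $\E(\tau_1\mid A_i)\to0$, and then to keep the graphs $\Gamma_i$ fixed while replacing each $A_i$ by a slightly smaller subset $A_i'$ on which the degree ratio is uniformly small. The bridge between the thinness functions $\tau_1$ and the degree ratio is the first-order density $\rho_{A_i,1}$. Since the underlying graphs do not change, the modified sequence $(\Gamma_i,A_i',\mu_i)$ automatically remains a sofic approximation to $\mu$, and the only things left to verify are that $\mu_i(A_i')\to1$ and that the conditional expectation of the degree ratio tends to zero.

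The first step is a pointwise algebraic identity. Fix $x\in A_i$ and write $d\colonequals\deg_{A_i}(x)$ and $d'\colonequals\deg_{\Gamma_i\backslash A_i}(x)$ for the number of its neighbors inside and outside $A_i$. Because $x$ itself lies in $A_i$ and so contributes to the numerator of the first-order density, one has $\rho_{A_i,1}(x)=\frac{1+d}{1+d+d'}$, whence $\frac{d}{d'}\leq\frac{1+d}{d'}=\frac{\rho_{A_i,1}(x)}{1-\rho_{A_i,1}(x)}$. Next I would pass to averages: by Proposition~\ref{prop1} applied with $r=1$, $\int_{\Gamma_i}\rho_{A_i,1}\,d\mu_i=\int_{A_i}\tau_1\,d\mu_i=\mu_i(A_i)\,\E(\tau_1\mid A_i)\equalscolon\eta_i$, and the right-hand side tends to zero; thus $\E(\rho_{A_i,1})=\eta_i\to0$.

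The second step is a truncation governed by Markov's inequality. Put $\delta_i\colonequals\sqrt{\eta_i}\to0$ and set $A_i'\colonequals\{x\in A_i:\rho_{A_i,1}(x)\leq\delta_i\}$. Markov's inequality gives $\mu_i(A_i\backslash A_i')\leq\eta_i/\delta_i=\sqrt{\eta_i}\to0$, so that $\mu_i(A_i')=\mu_i(A_i)-\mu_i(A_i\backslash A_i')\to1$. Now I would exploit the monotonicity of the ratio under shrinking the distinguished set: since $A_i'\subseteq A_i$, passing from $A_i$ to $A_i'$ can only decrease the number of neighbors of a point lying in the distinguished set and increase the number lying in its complement, so for every $x\in A_i'$ we get $\frac{\deg_{A_i'}(x)}{\deg_{\Gamma_i\backslash A_i'}(x)}\leq\frac{\deg_{A_i}(x)}{\deg_{\Gamma_i\backslash A_i}(x)}\leq\frac{\rho_{A_i,1}(x)}{1-\rho_{A_i,1}(x)}\leq\frac{\delta_i}{1-\delta_i}$. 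Moreover the condition $\rho_{A_i,1}(x)<1$ forces $x$ to have at least one neighbor outside $A_i$, hence outside $A_i'$, so every denominator is nonzero and each ratio is well defined. Integrating over $A_i'$ then yields $\E\left(\left.\frac{\deg_{A_i'}(x)}{\deg_{\Gamma_i\backslash A_i'}(x)}\,\right|\,A_i'\right)\leq\frac{\delta_i}{1-\delta_i}\to0$, which is exactly the asserted limit.

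The main obstacle is precisely the behavior near $\rho_{A_i,1}=1$, that is, at points of $A_i$ almost all of whose neighbors already lie in $A_i$: there the quantity $\rho/(1-\rho)$, and hence the degree ratio, blows up (and is even infinite when a point has no neighbor in the complement), so the averaged bound $\E(\rho_{A_i,1})\to0$ by itself says nothing about the expectation of $\rho/(1-\rho)$. The device that resolves this is the reassignment of the small (by Markov) set of offending points to the complement; the key observation making this legitimate is that enlarging the complement can only \emph{decrease} each individual degree ratio, so the single truncation simultaneously controls the expectation, preserves $\mu_i(A_i')\to1$, and guarantees nonvanishing denominators. Everything else is routine bookkeeping.
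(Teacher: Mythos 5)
Your proof is correct, and its skeleton matches the paper's: start from Proposition~\ref{ergseq} plus the contraction trick, and use the identity $\rho_{A_i,1}(x)=\frac{1+\deg_{A_i}(x)}{1+\deg_{A_i}(x)+\deg_{\Gamma_i\backslash A_i}(x)}$ at points of $A_i$ to convert smallness of the thinness averages into smallness of degree ratios. Where you genuinely diverge is in the endgame. The paper keeps the sets $A_i$ fixed and argues by contradiction that, for every $K>0$, the subset $A_{i,K}\subseteq A_i$ on which $\deg_{\Gamma_i\backslash A_i}(x)\leq K\deg_{A_i}(x)$ satisfies $\mu_i(A_{i,K})\to0$; this is convergence \emph{in measure} of the ratio to zero, and the paper then simply asserts the stated limit of conditional expectations. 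Read literally, that last step is a gap: the ratio is unbounded (indeed infinite at points of $A_i$ with no external neighbors), so convergence in measure does not yield convergence of the conditional expectations unless the sets are modified. Your truncation supplies exactly this missing modification: Markov's inequality applied to $\E(\rho_{A_i,1})=\mu_i(A_i)\,\E(\tau_1\mid A_i)\to0$ (an explicit and correct use of Proposition~\ref{prop1}, where the paper instead writes $\E(\tau_1\mid A_i)$ for what is really $\E(\rho_{A_i,1}\mid A_i)$ --- harmless, since the latter is bounded by the former), combined with the monotonicity observation that shrinking the distinguished set only decreases each pointwise ratio, gives the uniform bound $\delta_i/(1-\delta_i)$ on $A_i'$, hence the expectation statement, and also guarantees nonvanishing denominators. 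So the two arguments share the same bridge between $\tau_1$ and degrees, but yours buys rigor: it proves the proposition exactly as stated, whereas the paper's proof establishes the in-measure version and leaves the set-shrinking step implicit.
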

\begin{proof}
Let $(\Gamma_i,A_i,\mu_i)$ be finite contracted Schreier graphs that are a sofic approximation to $\mu$ and such that $\mu_i(A_i)\to1$ and $\E(\tau_1\mid A_i)\to0$. We have
\begin{align*}
\E(\tau_1\mid A_i)&=\frac{1}{|A_i|}\sum_{x\in A_i}\frac{1+\deg_{A_i}(x)}{1+\deg(x)}\\
&=\frac{1}{|A_i|}\sum_{x\in A_i}\frac{1+\deg_{A_i}(x)}{1+\deg_{A_i}(x)+\deg_{\Gamma_i\backslash A_i}(x)}<\eps_i,
\end{align*}
with $\eps_i\to0$. It follows that, for any $K>0$, the subsets $A_{i,K}\subset A_i$ over which $\deg_{\Gamma_i\backslash A_i}(x)\leq K\deg_{A_i}(x)$ satisfy $\mu_i(A_{i,K})\to0$. Indeed, were this not the case, we would have
\begin{align*}
\E(\tau_1\mid A_i)&\geq\E(\tau_1\mid A_{i,K})\mu_i(A_{i,K})\\
&\geq\frac{1}{|A_{i,K}|}\sum_{x\in A_{i,K}}\frac{1+\deg_{A_i}(x)}{1+(K+1)\deg_{A_i}(x)}\,\mu_i(A_{i,K})\\
&\geq\frac{\delta}{K+1}
\end{align*}
for all $i\in\N$, where $\delta>0$ is a fixed lower bound of the values $\mu_i(A_{i,K})$. We therefore find that the ratio of the expected number of internal neighbors to external neighbors of points in $A_i$ tends to zero, as desired.
\end{proof}

\section{Conservativity of the boundary action}

There is a natural \emph{boundary}, denoted $\partial\F_n$, associated to the free group $\F_n=\langle a_1,\ldots,a_n\rangle$, and it admits a number of interpretations. Viewing elements of $\F_n$ as finite reduced words in the alphabet $\A^\pm=\{a_1^{\pm1},\ldots,a_n^{\pm1}\}$, the boundary $\partial\F_n$ is the space of infinite reduced words in the alphabet $\A^\pm$ endowed with the topology of pointwise convergence. Equivalently, $\partial\F_n$ is the projective limit of the spheres $\partial U_r(\F_n,e)$, i.e.\ the sets of words in $\F_n$ of length $r$, where each such set is given the discrete topology and the connecting maps serve to delete the last symbol of a given word (the space $\partial\F_n$ is thus a Cantor set provided $n>1$). Taking a more geometric view, $\partial\F_n$ is naturally homeomorphic to the \emph{space of ends} of the Cayley graph of $\F_n$. The latter object being a Gromov hyperbolic space, $\partial\F_n$ may be viewed as the \emph{hyperbolic boundary} of $\F_n$ (so that $\F_n\cup\partial\F_n$ is its \emph{hyperbolic compactification}). And when equipped with the uniform measure $\m$ (which we will define in a moment), ($\partial\F_n,\m)$ is naturally isomorphic to the \emph{Poisson boundary} of the simple random walk on $\F_n$, a fact first established by Dynkin and Malyutov \cite{DM}.

Grigorchuk, Kaimanovich, and Nagnibeda \cite{GKN} recently studied the ergodic properties of the action of a subgroup $H\leq\F_n$ on the boundary of $\F_n$ equipped with the uniform measure $\m$. To be explicit, $\m$ is the probability measure given by
\begin{equation}\label{meas}
\m(g)=\frac{1}{2n(2n-1)^{|g|-1}},
\end{equation}
where we again allow $g$ to represent both an element of $\F_n$ (here $|g|$ is the length of $g$) and the cylinder set consisting of those infinite words whose truncations to their first $|g|$ symbols are equal to $g$. Of course, the denominator of (\ref{meas}) is just the cardinality of the sphere $\partial U_{|g|}(\F_n,e)$.

The aforementioned boundary action, which we denote by $H\circlearrowright(\partial\F_n,\m)$, is analogous to the action of a Fuchsian group on the boundary of the hyperbolic plane $\partial\H^2\cong\S^1$ equipped with Lebesgue measure: both actions, the latter being a classical object of study, are boundary actions of discrete groups of isometries of a Gromov hyperbolic space. In \cite{GKN}, the combinatorial structure of the space $\F_n$, and especially the Schreier graphs corresponding to its subgroups, are exploited in order to investigate the action $H\circlearrowright(\partial\F_n,\m)$. In particular, Theorem~2.12 of \cite{GKN} gives a combinatorial characterization of the Hopf decomposition of this action. Let us review this result.

Let $G\circlearrowright(X,\mu)$ be a quasi-invariant action of a countable group on a Lebesgue space, i.e.\ a measure space whose nonatomic part is isomorphic to the unit interval equipped with Lebesgue measure. Recall that such an action is \emph{conservative} if every measurable subset $E\subseteq X$ is \emph{recurrent}, meaning that it is contained in the union of its $g$-translates, where $g\in G\backslash\{e\}$. The action is \emph{dissipative} if $(X,\mu)$ is the union of the translates of a \emph{wandering set}, i.e.\ a subset $E\subseteq X$ whose $G$-translates are pairwise disjoint. Every quasi-invariant action $G\circlearrowright(X,\mu)$ admits a unique \emph{Hopf decomposition}
\[
X=\C\sqcup\D
\]
into conservative and dissipative parts (see \cite{A} and the references therein), so that the action of $G$ restricted to $\C$ is conservative and the action of $G$ restriced to $\D$ is dissipative.

Turning our attention to the action $H\circlearrowright(\partial\F_n,\m)$, consider the Schreier graph $(\Gamma,H)$ of $H$, and let $T\subseteq\Gamma$ be a \emph{geodesic spanning tree}, i.e.\ a spanning tree such that $d_T(H,Hg)=d_\Gamma(H,Hg)$ for all vertices (cosets) $Hg$. Such a spanning tree always exists. Let $\Omega_H\subseteq\partial\F_n$ denote the \emph{Schreier limit set}. It is the set of infinite words (which of course correspond to infinite paths in $\Gamma$) that pass through edges not in $T$ infinitely often. Let $\Delta_H\subseteq\F_n$ denote the \emph{Schreier fundamental domain}. It is the set of infinite words that remain in $T$. We then have the following boundary decomposition:
\begin{equation}\label{decomp}
\partial\F_n=\Omega_H\sqcup\bigsqcup_{h\in H}h\Delta_H.
\end{equation}
That is, $\partial\F_n$ is the disjoint union of the Schreier limit set and the $H$-translates of the Schreier fundamental domain. It is shown in \cite{GKN} (see Theorem~2.12) that the decomposition (\ref{decomp}) is in fact the Hopf decomposition of the action $H\circlearrowright(\partial\F_n,\m)$.
\begin{theorem}\rm(Grigorchuk, Kaimanovich, and Nagnibeda)
\emph{The conservative part of the boundary action $H\circlearrowright(\partial\F_n,\m)$ coincides with the Schreier limit set $\Omega_H$. The dissipative part coincides with the $H$-translates of the Schreier fundamental domain $\Delta_H$.}
\end{theorem}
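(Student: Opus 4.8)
The plan is to work in the universal cover and reduce both assertions to the combinatorics of the geodesic spanning tree $T$. First I would fix the dictionary between $\partial\F_n$ and the Schreier graph: an infinite reduced word $\xi=x_1x_2\cdots$ is the same datum as a non-backtracking ray issuing from the root of $\Gamma$, read off one edge at a time. Passing to the Cayley tree $\tilde{\Gamma}$ of $\F_n$ (on which $H$ acts freely by deck transformations, with $\Gamma=H\backslash\tilde{\Gamma}$), the chosen geodesic spanning tree $T$ lifts to a subtree $\tilde{T}\ni e$ which is a fundamental domain: the translates $\{h\tilde{T}\}_{h\in H}$ partition $\tilde{\Gamma}$ into pairwise vertex-disjoint copies. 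Under the identification of boundary points with geodesic rays from $e$, one has $\Delta_H=\partial\tilde{T}$ and $h\Delta_H=\partial(h\tilde{T})$, while $\Omega_H$ consists of exactly those rays which leave $\tilde{T}$ and thereafter pass through infinitely many distinct copies $h\tilde{T}$, equivalently cross non-tree edges infinitely often. Because a geodesic ray in a tree either stabilizes in a single copy or visits infinitely many, this recovers the stated partition $\partial\F_n=\Omega_H\sqcup\bigsqcup_{h}h\Delta_H$.

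The dissipative inclusion $\bigsqcup_h h\Delta_H\subseteq\D$ is then essentially formal. Since the subtrees $h\tilde{T}$ are pairwise vertex-disjoint, their end sets are disjoint, so $g\Delta_H\cap\Delta_H=\emptyset$ for every $g\in H\setminus\{e\}$. Thus $\Delta_H$ is a wandering set, and its $H$-saturation $\bigsqcup_h h\Delta_H$ is by definition contained in the dissipative part.

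The heart of the matter is the conservative inclusion $\Omega_H\subseteq\C$. Here I would invoke the Hopf recurrence criterion: $\xi$ lies in the conservative part precisely when $\sum_{h\in H}\frac{dh_*\m}{d\m}(\xi)=\infty$. For the boundary action the Radon--Nikodym cocycle is explicit---writing $(h\mid\xi)$ for the length of the longest common prefix of the reduced words $h$ and $\xi$, a short computation on cylinders (checked first on generators and extended by the cocycle identity) gives $\frac{dh_*\m}{d\m}(\xi)=(2n-1)^{2(h\mid\xi)-|h|}$, a power of $(2n-1)$ recording the horocyclic displacement of $h$ along $\xi$. Grouping the sum by $m=(h\mid\xi)$ turns it into $\sum_{m\geq0}(2n-1)^{m}L_m(\xi)$, where $L_m(\xi)=\sum_{w}(2n-1)^{-|w|}$ runs over the reduced words $w$ for which the loop $\xi_1\cdots\xi_m\,w$ lies in $H$ and first departs from $\xi$ at step $m+1$; that is, $L_m$ is a weighted count of the $H$-loops at the root which follow $\xi$ exactly as far as its $m$-th vertex before returning. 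The task is to show that this series diverges exactly when $\xi$ crosses non-tree edges infinitely often: each additional crossing witnesses a genuinely new return loop of controlled length, and the geodesic (rather than arbitrary) choice of $T$ is what lets one bound the lengths $|w|$ uniformly and extract a divergent subseries, whereas a ray that eventually stays in $T$ escapes into a single copy where no further loops of $H$ are available and the series converges.

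The main obstacle is precisely this last equivalence---the uniform control of the return series $L_m(\xi)$ and the verification that infinitely many non-tree crossings force $\sum_m(2n-1)^m L_m(\xi)=\infty$ rather than merely producing infinitely many summable terms; this is where the geometry of the spanning tree does the real work. Once both inclusions $\Omega_H\subseteq\C$ and $\bigsqcup_h h\Delta_H\subseteq\D$ are established, the identities $\partial\F_n=\C\sqcup\D=\Omega_H\sqcup\bigsqcup_h h\Delta_H$ exhibit two complementary decompositions of the whole space; since each piece of one is contained in the corresponding piece of the other, the containments are forced to be equalities, giving $\C=\Omega_H$ and $\D=\bigsqcup_h h\Delta_H$, as claimed.
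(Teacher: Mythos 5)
A preliminary remark: the paper you are writing into does not prove this statement at all---it quotes it from \cite{GKN} (Theorem~2.12 there)---so your proposal can only be measured against the standard argument, not against anything in the text. Your skeleton is the natural one and is sound as far as it goes: lifting to the Cayley tree so that the geodesic spanning tree $T$ lifts to pairwise disjoint convex subtrees $\{h\tilde{T}\}_{h\in H}$ partitioning the vertices; identifying $h\Delta_H$ with the end set of $h\tilde{T}$, which makes $\Delta_H$ wandering and gives $\bigsqcup_h h\Delta_H\subseteq\D$; reducing $\Omega_H\subseteq\C$ to divergence of the Hopf series via the Radon--Nikodym formula $\frac{dh_*\m}{d\m}(\xi)=(2n-1)^{2(h\mid\xi)-|h|}$, which is correct; and closing with the two-partition argument, which also correctly renders the convergence half of the Hopf criterion unnecessary.

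The genuine gap is that the one step carrying all the content of the theorem---that $\xi\in\Omega_H$ forces $\sum_{h\in H}(2n-1)^{2(h\mid\xi)-|h|}=\infty$---is announced as ``the main obstacle'' but never proved. Everything you do establish (the partition, the wandering property, the formal endgame) is soft, so what you have is a reduction of the theorem to its hard part, not a proof. The step can be closed, and along the line you gesture at, so the approach does not fail; here is what is missing. Let $\tilde{T}=h_0\tilde{T},h_1\tilde{T},h_2\tilde{T},\dots$ be the successive copies met by the ray $[e,\xi)$ and $p_k$ the first vertex of the ray lying in $h_k\tilde{T}$. Two facts are needed. (i) The $h_k$ are pairwise distinct: every non-tree edge of $\tilde{\Gamma}$ joins two \emph{distinct} copies, because the loop in $\Gamma$ it determines (tree path, non-tree edge, tree path back) is a member of the Nielsen--Schreier free basis of $H$ associated to $T$, hence a nontrivial deck transformation. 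This point, which you also glossed over, is exactly what makes ``crosses non-tree edges infinitely often'' equivalent to ``visits infinitely many copies,'' the other ingredient being that copies, as subtrees of a tree, are convex and hence never revisited. (ii) Each $h_k$ contributes a term at least $1$: the tree path in $h_k\tilde{T}$ from $h_k$ to $p_k$ projects to the $T$-geodesic from the root to $\bar{p}_k$, so $d(h_k,p_k)\leq d_T(\mathrm{root},\bar{p}_k)=d_\Gamma(\mathrm{root},\bar{p}_k)\leq d(e,p_k)$, where the middle equality is precisely the geodesic property of $T$ and the last inequality holds because the covering projection does not increase distances. Since $p_k$ lies on the ray, $|h_k|-2(h_k\mid\xi)=\lim_{r\to\infty}\bigl(d(h_k,\xi_r)-r\bigr)\leq d(h_k,p_k)-d(e,p_k)\leq0$, i.e.\ $\frac{d(h_k)_*\m}{d\m}(\xi)=(2n-1)^{2(h_k\mid\xi)-|h_k|}\geq1$. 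Infinitely many distinct elements of $H$ thus contribute terms bounded below by $1$, the Hopf series diverges, and $\Omega_H\subseteq\C$ follows; your endgame then yields both equalities.
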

Moreover, Theorem~4.10 of \cite{GKN} shows that the measure of the Schreier fundamental domain is related to the growth of the Schreier graph $(\Gamma,H)$ of $H$.
\begin{theorem}\rm{(Grigorchuk, Kaimanovich, and Nagnibeda)}\label{diss}
\emph{The measure of the Schreier fundamental domain determined by a proper subgroup $H\in L(\F_n)$ is equal to}
\[
\m(\Delta_H)=\lim_{r\to\infty}\frac{|\partial U_r(\Gamma,H)|}{|\partial U_r(\F_n,e)|},
\]
\emph{and the above sequence of ratios is nonincreasing.}
\end{theorem}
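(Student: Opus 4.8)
The plan is to realize $\Delta_H$ as a decreasing intersection of cylinder sets whose measures are exactly the ratios appearing in the statement, and then to invoke continuity of the probability measure $\m$ from above. Write $\Delta_H^{(r)}$ for the set of infinite words whose length-$r$ prefix traces a path in $\Gamma$ (starting at the root $H$) that uses only edges of the geodesic spanning tree $T$. Since an infinite word remains in $T$ precisely when all of its prefixes do, we have $\Delta_H=\bigcap_r\Delta_H^{(r)}$, and these sets plainly decrease, $\Delta_H^{(r)}\supseteq\Delta_H^{(r+1)}$.

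First I would observe that $\Delta_H^{(r)}$ is a disjoint union of cylinder sets. Whether an infinite word lies in $\Delta_H^{(r)}$ depends only on its (reduced) length-$r$ prefix, so $\Delta_H^{(r)}=\bigsqcup_g[g]$, the union ranging over all reduced words $g$ of length $r$ whose path in $\Gamma$ stays inside $T$, and where $[g]$ denotes the cylinder determined by $g$. By (\ref{meas}) we have $\m([g])=1/|\partial U_r(\F_n,e)|$ for every reduced word of length $r$, so it only remains to count these $g$.

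The key step is to set up a bijection between such words and the sphere $\partial U_r(\Gamma,H)$. Here I would argue as follows. If a reduced word $g=\omega_1\cdots\omega_r$ traces a path in $\Gamma$ lying entirely in $T$, then, $g$ being reduced, the path never immediately retraces an edge; and since $T$ is a tree, there is at most one edge between any two of its vertices, so a walk in $T$ that never immediately retraces an edge can never return to a previously visited vertex. Hence the path is simple, its endpoint $v$ lies at distance exactly $r$ from the root, and $g$ is exactly the sequence of labels read along the unique $T$-geodesic from $H$ to $v$. Conversely, because $T$ is a \emph{geodesic} spanning tree we have $d_T(H,\cdot)=d_\Gamma(H,\cdot)$, so the sphere of radius $r$ in $T$ coincides with $\partial U_r(\Gamma,H)$, and each vertex $v$ in this sphere determines a unique $T$-geodesic from the root whose label is a reduced word of length $r$ staying in $T$. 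This yields the desired bijection, so the number of words in question equals $|\partial U_r(\Gamma,H)|$, and therefore
\[
\m\bigl(\Delta_H^{(r)}\bigr)=\frac{|\partial U_r(\Gamma,H)|}{|\partial U_r(\F_n,e)|}.
\]

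Finally, since the clopen sets $\Delta_H^{(r)}$ decrease to $\Delta_H$ and $\m$ is a probability measure, continuity from above gives $\m(\Delta_H)=\lim_r\m(\Delta_H^{(r)})$, which is the claimed limit; and the inclusions $\Delta_H^{(r)}\supseteq\Delta_H^{(r+1)}$ immediately show that the sequence of ratios is nonincreasing. I expect the only delicate point to be the geometric bijection: one must be careful that reducedness of $g$ in $\F_n$ forces the walk to be non-backtracking in $T$ (using that $T$, being a tree, carries no multi-edges) and that this in turn forces it to be a geodesic. Everything else is a routine application of the defining formula (\ref{meas}) for $\m$ together with measure continuity.
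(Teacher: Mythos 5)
Your proposal is correct, but it cannot be compared against an internal argument: the paper does not prove this statement at all --- it is quoted verbatim (as Theorem~4.10) from the Grigorchuk--Kaimanovich--Nagnibeda paper \cite{GKN}, and the present paper only uses it as an imported ingredient in the proof of Theorem~\ref{conservative}. So what you have produced is a self-contained reconstruction, and it holds up. The decomposition $\Delta_H=\bigcap_r\Delta_H^{(r)}$ into decreasing clopen sets, the identification of $\Delta_H^{(r)}$ as a disjoint union of length-$r$ cylinders each of $\m$-measure $1/|\partial U_r(\F_n,e)|$, and the appeal to continuity from above are all exactly right, and monotonicity of the ratios falls out of the inclusions for free. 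The geometric bijection, which you correctly single out as the delicate point, also works: reducedness forces non-backtracking in $T$ precisely because $T$, being a tree, has no loops or multi-edges (so a return to the previous vertex must reuse the same edge reversed, reading an inverse pair), and a non-backtracking walk in a tree is a simple path, whose endpoint then lies on the $\Gamma$-sphere of radius $r$ by the geodesic property $d_T(H,\cdot)=d_\Gamma(H,\cdot)$. The one step you assert a bit quickly is the surjectivity direction: that the word read along the $T$-geodesic from $H$ to a vertex $v\in\partial U_r(\Gamma,H)$ is itself \emph{reduced}. This needs the Schreier structure, not just the tree structure: if consecutive labels were $a,a^{-1}$, then since each vertex of a Schreier graph has a unique incoming $a$-edge, the path would have to revisit a vertex, contradicting simplicity. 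Adding that one sentence makes the bijection airtight, and with it your proof is complete.
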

\begin{remark}
Note that Theorem~\ref{diss} remains valid if one replaces the spheres $\partial U_r$ with neighborhoods $U_r$.
\end{remark}
The action $H\circlearrowright(\partial\F_n,\m)$ may be conservative. This is the case, for example, whenever $H$ is of finite index, or when $H$ is a normal subgroup of $\F_n$. The action may also be dissipative, which is the case, for instance, whenever $H$ is finitely generated and of infinite index. It may also be the case that both the conservative and dissipative parts of the action have positive measure: see, for instance, Example~4.27 of \cite{GKN}. It is our aim, however, to show that the boundary action of an invariant random subgroup is necessarily conservative. To this end, let us understand a \emph{$k$-cycle} to be a closed path which is isomorphic to a $k$-sided polygon. Our main idea is that an invariant random Schreier graph which satisfies property $D$  must have a certain ``density of $k$-cycles,'' i.e.\ that there exists a $k$ such that a given vertex of an invariant random Schreier graph belongs to a $k$-cycle with positive probability, and that this in turn restricts the growth of our random graph enough to render $\Delta_H$ a null set.
\begin{theorem}\label{conservative}
The boundary action $H\circlearrowright(\partial\F_n,\m)$ of a sofic random subgroup of the free group is conservative.
\end{theorem}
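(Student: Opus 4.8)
The plan is to exploit the combinatorial criterion recalled above. By Theorem~\ref{diss} together with the following remark, the action $H\circlearrowright(\partial\F_n,\m)$ is conservative precisely when $\m(\Delta_H)=0$, and $\m(\Delta_H)=\lim_{r\to\infty}\phi_r(H)$, where $\phi_r(H)\colonequals|U_r(\Gamma,H)|/|U_r(\F_n,e)|$ is a nonincreasing sequence of ratios. Since vanishing of this ratio is unaffected by shifting the root (rerooting by $g$ distorts $|U_r|$ by at most the fixed factor $(2n-1)^{|g|}$), the event $\{\m(\Delta_H)=0\}$ is invariant; passing to the ergodic decomposition I may therefore assume $\mu$ is ergodic, so that $\m(\Delta_H)$ is almost surely a constant that I must show equals zero. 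The single degenerate measure $\delta_{\{e\}}$, supported on the Cayley graph of $\F_n$, is disposed of directly, and for every other ergodic $\mu$ I split the argument according to whether or not $\mu$ enjoys property $D$.

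Suppose first that $\mu$ has property $D$. Writing $C_k\colonequals\{(\Gamma,x):x\text{ lies on a cycle of length }\leq k\}$, I would first show that $\mu(C_k)>0$ for some $k$. Indeed, the event that $\Gamma$ contains a cycle is rerooting-invariant, and since $\mu$ is ergodic and distinct from $\delta_{\{e\}}$ it has full measure; were $\mu(\bigcup_k C_k)=0$, then by invariance almost every rerooting of almost every $\Gamma$ would avoid all cycles, forcing $\Gamma$ to be a tree almost surely---a contradiction. Property $D$ applied to $C_k$ then yields a constant $c>0$ with $\E(\rho_{C_k,r})\geq c$ for every $r$: a definite proportion of the vertices in a random ball of any radius lie on a cycle of length at most $k$. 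The remaining task is to convert this uniform density of short cycles into the bound $\phi_r\to0$. The mechanism is that each vertex sitting on a $\leq k$-cycle forces two of the a priori $2n-1$ forward directions emanating from it to coalesce within $k$ steps; with a positive density of such vertices present in every shell, the effective branching ratio from one annulus of width $k$ to the next is depressed by a fixed multiplicative factor $1-c'<1$, and compounding this over the $\lfloor r/k\rfloor$ successive annuli drives the growth rate strictly below $2n-1$, whence $\phi_r\to0$.

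Suppose instead that $\mu$ fails property $D$. Here I would feed on Propositions~\ref{ergseq} and~\ref{lop}: failure of property $D$, together with the uniform Lipschitz property of the functions $\tau_r$ (Lemma~\ref{lip}) and ergodicity, produces finite contracted Schreier graphs $(\Gamma_i,A_i,\mu_i)$ forming a sofic approximation to $\mu$, with $\mu_i(A_i)\to1$ and with the lopsided degree behaviour $\E(\deg_{A_i}(x)/\deg_{\Gamma_i\setminus A_i}(x)\mid A_i)\to0$. I would then count the contracted edges running between $A_i$ and its complement. On one hand, every vertex has at most $|U_r(\F_n,e)|-1$ neighbours in the contracted graph, so the number of such cross-edges is at most $\mu_i(\Gamma_i\setminus A_i)\,|\Gamma_i|\,(|U_r(\F_n,e)|-1)=o\bigl(|\Gamma_i|\,|U_r(\F_n,e)|\bigr)$, since $\mu_i(\Gamma_i\setminus A_i)\to0$. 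On the other hand, lopsidedness says that for a typical $x\in A_i$ these cross-edges make up almost the whole ball $U_r(x)$, so that $|U_r(x)|$ is, on $\mu_i$-average over $A_i$, negligible compared with $|U_r(\F_n,e)|$. Dividing shows the average relative ball size tends to zero at the relevant scale; combined with the monotonicity of $\phi_r$ this forces $\m(\Delta_H)=0$.

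The main obstacle, common to both cases, is the transfer between the finite, fixed-radius information supplied by soficity and the genuinely asymptotic (tail) quantity $\m(\Delta_H)$, which is not continuous for Benjamini--Schramm convergence---indeed every finite graph has $\m(\Delta)=0$, so no naive limit of the approximations sees the obstruction. To draw a true conclusion about $\mu$ I must produce the estimate $\E_\mu(\phi_r)\to0$ at each fixed scale $r$ (letting $r\to\infty$ afterwards and using that $\phi_r$ is nonincreasing), rather than a single diagonal estimate tying the scale to the index of the approximation; the latter, as simple examples show, is compatible with $\m(\Delta)>0$. Getting the scales to match requires care in both cases, and in the property-$D$ case there is the additional difficulty of making rigorous the claim that a positive density of short cycles suppresses the growth \emph{multiplicatively} rather than merely additively, which is precisely what is needed to push the ratio all the way to zero.
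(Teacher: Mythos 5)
Your proposal reproduces the paper's strategy step for step---the reduction via Theorem~\ref{diss}, the dichotomy on property $D$, short cycles in the property-$D$ case, Proposition~\ref{lop} otherwise---but it is an outline rather than a proof: the two items you defer in your final paragraph are precisely where the paper does its real work, so you stop exactly where the proof has to begin. In the property-$D$ case, the paper makes your ``multiplicative suppression'' rigorous with a two-term linear recurrence for expected sphere sizes: with $\ell=\lfloor k/2\rfloor$, $f(r)=2n(2n-1)^{r-1}$, and $X_r$ the size of the sphere of radius $r$ about the root, each $k$-cycle meeting $\partial U_{r+m\ell}$ lowers the trivial bound on the next sphere by one, and keeping track of cycles already counted at the previous step yields
\[
\E(X_{r+(m+1)\ell})\leq\left((2n-1)^\ell-\eps\right)\E(X_{r+m\ell})+\eps^2\,\E(X_{r+(m-1)\ell}).
\]
The dominant root of the characteristic polynomial $t^2-\left((2n-1)^\ell-\eps\right)t-\eps^2$ is strictly smaller than $(2n-1)^\ell$ (this reduces to $\eps<(2n-1)^\ell$), so $\E(X_r)/f(r)\to0$ geometrically along the subsequence $r+m\ell$, and Theorem~\ref{diss} (with its remark, and dominated convergence to exchange limit and integral) gives $\E(\m(\Delta_H))=0$. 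Your annulus heuristic is the right picture, but without this bookkeeping---in particular the correction term that prevents double counting---the multiplicative decay is asserted, not proved.

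In the failure-of-property-$D$ case, your edge count is essentially sound, but your closing inference, ``combined with the monotonicity of $\phi_r$ this forces $\m(\Delta_H)=0$,'' is invalid as stated, as you yourself then admit: monotonicity gives $\phi_{r_i}\leq\phi_r$ for $r\leq r_i$, which bounds nothing at a fixed scale. The obstacle you flag is real but is resolved by the quantifier order already built into Proposition~\ref{ergseq}, and making this explicit is what completes the argument. The radii $r_i$ are not tied to the approximating graphs; they come first, being a subsequence along which $\E_\mu(\rho_{A,r})\to0$, which is a property of $\mu$ alone. Only afterwards does one choose $\Gamma_i$ to approximate $\mu$ to any desired accuracy at scale comparable to $2r_i$ (soficity supplies $(r,\eps)$-approximations for every pair). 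Because $\phi_{r_i}$ is a bounded cylinder function of radius $r_i$, the lopsidedness estimate $\E_{\mu_i}(\phi_{r_i})\to0$ then transfers back to the limit measure, giving $\E_\mu(\phi_{r_i})\to0$ along that subsequence---a statement about $\mu$ itself, not about the finite graphs. Since $r\mapsto\E_\mu(\phi_r)$ is nonincreasing, subsequential decay upgrades to $\lim_r\E_\mu(\phi_r)=0$, hence $\E_\mu(\m(\Delta_H))=0$. So your worry that a ``diagonal'' estimate is compatible with $\m(\Delta)>0$ is correct when the scales are chosen adversarially after the graphs (your finite-graph examples), but does not apply here; your proposal raises the right objection without supplying its resolution, and likewise leaves the property-$D$ recurrence as a declared difficulty, so as it stands neither half of the theorem is established.
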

\begin{proof}
Suppose first that $\mu$ is an invariant random Schreier graph that satisfies property $D$. It is not difficult to see that, with the exception of one trivial case, there must always exist a number $k$ such that the Borel set $A$ of Schreier graphs whose roots belong to a $k$-cycle has positive measure. Indeed, if this were not the case, then $\mu$ would be the Dirac measure concentrated on the Cayley graph of $\F_n$ (whose boundary action is of course conservative). By assumption, there thus exists an $\eps>0$ such that $\E(\rho_{A,r})\geq\eps$ for all $r$. Put $f(r)\colonequals2n(2n-1)^{r-1}$, let $X_r$ denote the size of the radius-$r$ sphere centered at the root of a $\mu$-random Schreier graph, let $\ell=\lfloor k/2\rfloor$, and let $r\geq1$ be an initial radius. Trivially, $\E(X_r)\leq f(r)$. We are then able to bound $\E(X_{r+\ell})$ as
\[
\E(X_{r+\ell})\leq f(r+\ell)-\eps f(r)
\]
and, continuing inductively, to obtain the general bound
\begin{align}
\E(X_{r+(m+1)\ell})&\leq(2n-1)^\ell\E(X_{r+m\ell})-\eps\left(\E(X_{r+m\ell})-\eps\E(X_{r+(m-1)\ell})\right)\nonumber\\
&=\left((2n-1)^\ell-\eps\right)\E(X_{r+m\ell})+\eps^2\E(X_{r+(m-1)\ell})\label{recur},
\end{align}
since each $k$-cycle that passes through the boundary of an $(r+m\ell)$-neighborhood allows us to decrease the trivial bound on the size of the boundary of an $(r+(m+1)\ell)$-neighborhood by one. Note that (\ref{recur}) is a linear homogenous recurrence relation with characteristic polynomial
\[
\chi(t)=t^2-\left((2n-1)^\ell-\eps\right)t-\eps^2.
\]
It is easy to see that $\chi$ has distinct real roots. The general solution of the recurrence relation (\ref{recur}) thus yields the bound
\begin{equation}\label{bound}
\begin{split}
\E(X_{r+m\ell})\leq C_0&\left((2n-1)^\ell-\eps+\sqrt{\left((2n-1)^\ell-\eps\right)^2+4\eps^2}\right)^m\\
&+C_1\left((2n-1)^\ell-\eps-\sqrt{\left((2n-1)^\ell-\eps\right)^2+4\eps^2}\right)^m,
\end{split}
\end{equation}
whereupon applying initial conditions readily gives $C_0=C_1=f(r)/2$ (in order to simplify notation, we have doubled the roots of $\chi$). By Theorem~\ref{diss}, we have
\begin{align*}
\E(\m(\Delta_H))&=\int\m(\Delta_H)\,d\mu\\
&=\int\lim_{r\to\infty}\frac{|\partial U_r(\Gamma,H)|}{|\partial U_r(\F_n,e)|}\,d\mu\\
&=\lim_{r\to\infty}\frac{1}{f(r)}\int|\partial U_r(\Gamma,H)|\,d\mu\\
&=\lim_{r\to\infty}\frac{1}{f(r)}\E(X_r).
\end{align*}
Passing to the subsequence $\{r+m\ell\}_{m\in\N}$ and replacing the second (and clearly smaller) term of (\ref{bound}) with the first, we see that
\begin{align*}
\lim_{r\to\infty}\frac{\E(X_r)}{f(r)}&\leq\lim_{m\to\infty}\frac{f(r)}{f(r+m\ell)}\left((2n-1)^\ell-\eps+\sqrt{\left((2n-1)^\ell-\eps\right)^2+4\eps^2}\right)^m\\
&=\lim_{m\to\infty}\left(1-\frac{\eps}{(2n-1)^\ell}+\sqrt{1-\frac{2\eps}{(2n-1)^\ell}+\frac{5\eps^2}{(2n-1)^{2\ell}}}\right)^m.
\end{align*}
But a simple calculation shows that what is inside the parentheses is less than one, so that the above limit is zero. It follows that $\E(\Delta_H)=0$ and therefore that the boundary action of our invariant random subgroup is conservative.

Suppose next that $\mu$ is a sofic random subgroup which is ergodic and does not satisfy property $D$. Then by Proposition~\ref{lop}, it has a lopsided sofic approximation, i.e.\ a sofic approximation consisting of contracted Schreier graphs $(\Gamma_i,A_i,\mu_i)$ such that $\mu_i(A_i)\to1$ and the average external degree of vertices in $A_i$ is much smaller than their average external degree, in the sense that their ratio tends to zero. Since $\mu_i(\Gamma_i\backslash A_i)\to0$, this implies that
\[
\E(\deg(x)\mid A_i)\ll\E(\deg(x)\mid \Gamma_i\backslash A_i),
\]
again in the sense that the ratio of these two quantities tends to zero. But the vertex degree of a point in a contracted Schreier graph $\Gamma^{(r)}$ is precisely one less than the size of the $r$-neighborhood of the corresponding uncontracted graph $\Gamma$. We thus find that, over a set of arbitrarily large measure, the ratio of the average size of (arbitrarily large) $r$-neighborhoods in our Schreier graphs to $|U_r(\F_n,e)|$ is arbitraily small, which proves our claim.
\end{proof}
To conclude this section, let us remark that, although Theorem~\ref{conservative} says, in effect, that sofic random subgroups cannot grow as quickly as the free group, it is reasonable to expect that they can still grow very quickly: it is proved in \cite{AGV} (see Theorem~40) that there exists a (nonatomic) regular unimodular random graph whose exponential growth rate is maximal.

\section{Cogrowth and limit sets}

It is interesting to examine other questions considered in \cite{GKN} for sofic random subgroups. Note, for example, that Theorem~\ref{conservative} immediately implies that, unless it is the Dirac measure concentrated on the $2n$-regular tree, a sofic random Schreier graph $\Gamma\in\Lambda(\F_n)$ cannot contain a \emph{branch} of $\F_n$, i.e.\ a subgraph isomorphic to the unique tree one of whose vertices has degree one and all of whose other vertices have degree $2n$, since the presence of a branch implies the existence of a nontrivial wandering set (another way to say this is that every edge of an invariant random Schreier graph must belong to a cycle). Recall, moreover, that the \emph{cogrowth} of a subgroup $H\leq\F_n$ (i.e.\ the ``growth of $H$ inside of $\F_n$'') is defined to be
\[
v_H\colonequals\limsup_{r\to\infty}\sqrt[r]{|H\cap U_r(\F_n,e)|}\leq2n-1.
\]
By Theorem~4.2 of \cite{GKN}, if $v_H<\sqrt{2n-1}$, then the action $H\circlearrowright(\partial\F_n,\m)$ is dissipative. We therefore have the following corollary of Theorem~\ref{conservative}.
\begin{corollary}\label{cogrowth}
The cogrowth of a sofic random subgroup $H\in L(\F_n)$ must satisfy $v_H\geq\sqrt{2n-1}$.
\end{corollary}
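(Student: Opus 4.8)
The plan is to derive Corollary~\ref{cogrowth} as an immediate consequence of Theorem~\ref{conservative} together with the cogrowth criterion from \cite{GKN}. The logical structure is a contrapositive: Theorem~4.2 of \cite{GKN} asserts that if $v_H<\sqrt{2n-1}$, then the action $H\circlearrowright(\partial\F_n,\m)$ is dissipative. Since Theorem~\ref{conservative} guarantees that the boundary action of a sofic random subgroup is conservative, such a subgroup cannot have a dissipative boundary action, and hence its cogrowth cannot lie strictly below $\sqrt{2n-1}$.

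The one point requiring care is that Theorem~\ref{conservative} is a statement about the \emph{random} subgroup $H$ (i.e.\ about the measure $\mu$), whereas the cogrowth $v_H$ is defined pointwise for an individual subgroup. So the first step is to interpret the conclusion of Theorem~\ref{conservative} correctly: conservativity of the boundary action holds for $\mu$-almost every $H$ (indeed, modulo zero, as per our standing convention). Thus for $\mu$-almost every $H$, the action $H\circlearrowright(\partial\F_n,\m)$ is conservative, which is to say its dissipative part $\D$ is null. The plan is then simply to apply the cogrowth criterion of \cite{GKN} at each such $H$: if we had $v_H<\sqrt{2n-1}$, the action would be dissipative, contradicting conservativity. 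Hence $v_H\geq\sqrt{2n-1}$ for $\mu$-almost every $H$, which is exactly the claimed statement (understood, as always, modulo zero).

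I do not anticipate any serious obstacle here, as the corollary is a direct transcription of an existing implication into the random setting. The only subtlety worth flagging explicitly is the distinction between ``dissipative'' (the whole action) and ``having a nontrivial dissipative part,'' but the criterion of \cite{GKN} as quoted gives full dissipativity under $v_H<\sqrt{2n-1}$, so the contradiction with conservativity (where $\D$ is null) is clean. One could write the proof in a single sentence: by Theorem~\ref{conservative} the boundary action is conservative, so by Theorem~4.2 of \cite{GKN} we cannot have $v_H<\sqrt{2n-1}$, whence $v_H\geq\sqrt{2n-1}$.
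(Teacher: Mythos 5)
Your proposal is correct and follows exactly the paper's argument: the corollary is deduced by contraposition from Theorem~\ref{conservative} together with Theorem~4.2 of \cite{GKN}, which states that $v_H<\sqrt{2n-1}$ forces the boundary action to be dissipative. Your additional remark that conservativity holds for $\mu$-almost every $H$ (i.e.\ modulo zero) is consistent with the paper's standing convention and does not change the argument.
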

Alternatively, a Schreier graph is \emph{Ramanujan} if and only if its cogrowth does not exceed $\sqrt{2n-1}$, and it is proved in \cite{AGV} (see Theorem~5) that random unimodular $d$-regular graphs are Ramanujan if and only if they are trees, which shows that an invariant random subgroup $H$ satisfies $v_H>\sqrt{2n-1}$.

There are various limit sets associated to a subgroup $H\leq\F_n$ (most of which descend from the general theory of discrete groups of isometries of Gromov hyperbolic spaces). The \emph{radial limit set}, denoted $\Lambda_H^{\rad}$, is the set of limit points (in $\partial\F_n$) of sequences of elements of $H$ which are contained within a tubular neighborhood of a certain geodesic ray in $\F_n$. There are the \emph{small horospheric limit set}, denoted $\Lambda_H^{\hor,s}$, which is the set of boundary points $\omega\in\partial\F_n$ such that any \emph{horosphere} centered at $\omega$ contains infinitely elements of $H$, the Schreier limit set $\Omega_H$, and the \emph{big horospheric limit set}, denoted $\Lambda_H^{\hor,b}$, which is the set of boundary points $\omega\in\partial\F_n$ such that a certain horosphere centered at $\omega$ contains infinitely elements of $H$. There are also the divergence set of the \emph{Poincar\'{e} series} of $H$, denoted $\Sigma_H$, and the \emph{full limit set}, denoted $\Lambda_H$, which is the set of all limit points (in $\partial\F_n$) of elements of $H$. We refer the reader to \cite{GKN} for the precise definitions of these sets. 

As is shown in \cite{GKN}, there is a certain amount of flexibility in the $\m$-measures of the aforementioned limit sets for arbitrary subgroups $H\leq\F_n$: although several of these sets necessarily have the same measure, the measure of the full limit set $\Lambda_H$ may take on a range of values (and may well be a null set). Once again, however, the situation for sofic random subgroups is more rigid, as the following theorem shows.
\begin{theorem}\label{lim}
Let $H$ be a sofic random subgroup. Then the limit sets $\Lambda_H^{\hor,s}$, $\Omega_H$, $\Lambda_H^{\hor,b}$, $\Sigma_H$, and $\Lambda_H$ all have full $\m$-measure.
\end{theorem}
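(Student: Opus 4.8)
The plan is to reduce the statement about these five limit sets to the conservativity result (Theorem~\ref{conservative}), exploiting the structure already laid out in \cite{GKN}. First I would recall the chain of inclusions that necessarily holds among these sets for any subgroup $H\leq\F_n$: one has
\[
\Lambda_H^{\hor,s}\subseteq\Omega_H\subseteq\Lambda_H^{\hor,b}\subseteq\Sigma_H\subseteq\Lambda_H,
\]
so that it suffices to prove the smallest set, $\Lambda_H^{\hor,s}$, has full measure, since the remaining containments then force all five to be conull. (The precise chain should be extracted from \cite{GKN}, but some such nesting is what makes a single argument suffice.) This is the structural observation that collapses five claims into one.

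The key step is to connect the smallest of these sets to the Hopf decomposition. Recall from the Grigorchuk--Kaimanovich--Nagnibeda theorem quoted above that the conservative part of the action $H\circlearrowright(\partial\F_n,\m)$ coincides with the Schreier limit set $\Omega_H$, and its complement is the union $\bigsqcup_{h\in H}h\Delta_H$ of translates of the Schreier fundamental domain. Theorem~\ref{conservative} tells us precisely that this action is conservative, i.e.\ that the dissipative part is null, so $\m(\Omega_H)=1$. Since $\Omega_H$ sits between $\Lambda_H^{\hor,s}$ and the larger sets, the immediate consequence is $\m(\Lambda_H^{\hor,b})=\m(\Sigma_H)=\m(\Lambda_H)=1$. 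The only remaining work is to show $\Lambda_H^{\hor,s}$ itself is conull, which does not follow automatically from $\Omega_H$ being conull unless $\Lambda_H^{\hor,s}$ and $\Omega_H$ differ by a null set.

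The main obstacle I anticipate is exactly this last point: whether the \emph{small} horospheric limit set agrees with $\Omega_H$ up to measure zero. For a general subgroup these can genuinely differ, so I expect to need an additional ingredient specific to invariant random subgroups. Here I would invoke invariance (stochastic homogeneity): the event that $\omega$ is a small-horospheric limit point is, after passing through $\Theta_A$ for the appropriate $A$, a tail-type event on the random Schreier graph, and I would argue that by a $0$--$1$ law (ergodicity, as in Proposition~\ref{ergseq}) its $\m$-measure, integrated against $\mu$, is either $0$ or $1$. Ruling out the value $0$ would follow from $\m(\Omega_H)=1$ together with the density of cycles established in the proof of Theorem~\ref{conservative}: the same recurrence of non-tree edges that forces conservativity also forces horospheres to capture infinitely many elements of $H$. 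Thus the hard part is establishing that the small horospheric limit set is conull, and the route is to combine the conservativity of the boundary action with an ergodicity/invariance argument to upgrade $\Omega_H$-conullity to $\Lambda_H^{\hor,s}$-conullity.
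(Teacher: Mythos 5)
Your overall strategy --- reduce everything to Theorem~\ref{conservative} via the chain of inclusions from \cite{GKN} --- matches the paper's, and it correctly disposes of four of the five sets: conservativity gives $\m(\Omega_H)=1$, and the upward inclusions then force $\Lambda_H^{\hor,b}$, $\Sigma_H$, and $\Lambda_H$ to be conull. The gap is in your treatment of $\Lambda_H^{\hor,s}$. You are right that conullity of $\Omega_H$ does not pass downward through an inclusion, but the additional ingredient you propose (an ergodicity/zero--one law argument for a ``tail-type event,'' plus the density of cycles from the proof of Theorem~\ref{conservative}) is not carried out and would be problematic: the theorem does not assume $\mu$ is ergodic; the claim that the small-horospheric property is a tail event subject to a zero--one law is unjustified; and no argument is given for why recurrence through non-tree edges forces \emph{every} horosphere centered at a boundary point to contain infinitely many elements of $H$ --- that is precisely the difference between $\Omega_H$ and $\Lambda_H^{\hor,s}$ you are trying to bridge, so as stated the step is circular.

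In fact, no invariance-specific ingredient is needed here, and your premise that the two sets ``can genuinely differ'' for a general subgroup, while true at the level of sets, is misleading at the level of measure. Theorems~3.20 and~3.21 of \cite{GKN} assert, for an \emph{arbitrary} subgroup $H\leq\F_n$, not only the chain of inclusions
\[
\Lambda_H^{\rad}\subseteq\Lambda_H^{\hor,s}\subseteq\Omega_H\subseteq\Lambda_H^{\hor,b}\subseteq\Sigma_H\subseteq\Lambda_H,
\]
but also that the four middle sets $\Lambda_H^{\hor,s}$, $\Omega_H$, $\Lambda_H^{\hor,b}$, and $\Sigma_H$ all have the same $\m$-measure. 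This deterministic fact is exactly what the paper invokes: once $\m(\Omega_H)=1$, the equality of measures immediately yields $\m(\Lambda_H^{\hor,s})=1$, and the theorem follows. Your anticipated obstacle therefore dissolves upon quoting the right result from \cite{GKN}; the speculative probabilistic argument should be replaced by that citation.
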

\begin{proof}
By Theorems~3.20 and 3.21 of \cite{GKN}, the aforementioned limit sets are contained in one another in the order in which we have listed them, i.e.\
\[
\Lambda_H^{\rad}\subseteq\Lambda_H^{\hor,s}\subseteq\Omega_H\subseteq\Lambda_H^{\hor,b}\subseteq\Sigma_H\subseteq\Lambda_H,
\]
and the middle four of these have the same $\m$-measure. By Theorem~\ref{conservative}, $\m(\Omega_H)=1$. These facts taken together imply the claim.
\end{proof}
By Theorem~3.35 of \cite{GKN} (which is an analogue of the \emph{Hopf-Tsuji-Sullivan theorem}, valid for discrete groups of isometries of $n$-dimensional hyperbolic space), either $\m(\Lambda_H^{\rad})=1$ or $\m(\Lambda_H^{\rad})=0$, the former occurring when the simple random walk on $(\Gamma,H)$ is recurrent and the latter when the simple random walk on $(\Gamma,H)$ is transient. The following examples show that the $\m$-measure of the radial limit set of a nonatomic invariant random subgroup may be either zero or one.
\begin{example}(An invariant random subgroup with the property that $\m(\Lambda_H^{\rad})=1$) Consider the Cayley graph $\Gamma$ of the group $\Z^2$ constructed with respect to the standard generators $a=(1,0)$ and $b=(0,1)$. It is a classical result that the simple random walk on $\Z^2$ is recurrent \cite{Po}, so Theorem~3.35 of \cite{GKN} implies that $\m(\Lambda_H^{\rad})=1$, where $H$ is the fundamental group of $\Gamma$. The graph $\Gamma$ contains infinitely many ``$a$-chains,'' i.e.\ bi-infinite geodesics labeled with the generator $a$, and by independently reversing the orientations of these $a$-chains or leaving their orientations fixed, we generate a large space of Schreier graphs each of whose underlying unlabeled graphs is isomorphic to the two-dimensional integer lattice (in particular, the simple random walk on these graphs remains recurrent). There is natural uniform measure on this space (the uniform measure on its projective structure), and it is not difficut to see that this measure is invariant.
\end{example}
\begin{example}(An invariant random subgroup with the property that $\m(\Lambda_H^{\rad})=0$) Consider the Cayley graph $\Gamma$ of the group $\Z^3$ constructed with respect to the standard generators $a=(1,0,0)$, $b=(0,1,0)$, and $c=(0,0,1)$. It is again a classical result that the simple random walk on $\Z^3$ (or, indeed, on $\Z^n$ for $n\geq3$) is transient, so that $\m(\Lambda_H^{\rad})=0$, where $H$ is the fundamental group of $\Gamma$. By employing the same trick as in the previous example, we again generate a large space of Schreier graphs for which the uniform measure is a nonatomic invariant probability measure.
\end{example}

\end{document}